\newtheorem{lem}{Lemma}[section]
\newtheorem{thm}{Theorem}[section]
\newtheorem{example}{Example}[section]
\newtheorem{rem}{Remark}[section]
\numberwithin{equation}{section}
\numberwithin{figure}{section}
\numberwithin{table}{section}
\theoremstyle{plain}
\renewcommand{\d}{{\rm{d}}}
\newcommand{\beq}{\begin{equation}}
\newcommand{\eeq}{\end{equation}}
\newcommand{\var}{\text{\normalfont Var}}
\newcommand{\cov}{\text{\normalfont Cov}}
\newcommand{\ee}{\mathbb{E}}
\newcommand{\pp}{\mathbb{P}}
\newcommand{\rr}{\mathbb{R}}
\newcommand{\nn}{\mathbb{N}}
\newcommand{\zz}{\mathbb{Z}} 
\newcommand{\qq}{\mathbb{Q}}
\def\qed{\rule{2mm}{2mm}}
\begin{document}

\title{Least Squares-Based Permutation Tests in Time Series}

\author{Joseph P. Romano\footnote{Supported by NSF Grant MMS-1949845.}
 \\
Departments of Statistics and Economics \\
Stanford University \\
\href{mailto:romano@stanford.edu}{romano@stanford.edu}
\and
Marius A. Tirlea\footnote{Supported by NSF Grant MMS-1949845.} \\
Department of Statistics \\
Stanford University \\
\href{mailto:mtirlea@stanford.edu}{mtirlea@stanford.edu} 
}

\date{\today}

\maketitle

\begin{abstract}
This paper studies permutation tests for regression parameters in a  time series setting, where the time series is assumed stationary but may exhibit an  arbitrary (but weak) dependence structure.
In such a setting, it is perhaps surprising that permutation tests can offer
any type of inference guarantees, since permuting of covariates can destroy its relationship with the response.   Indeed,  the fundamental assumption of exchangeability of errors required for the finite-sample exactness of permutation tests,  can easily fail.  However, we show that permutation tests may be constructed which are asymptotically valid for a wide class of stationary processes, but remain exact when exchangeability holds.
We also consider the problem of testing for no monotone trend and we construct asymptotically valid permutation tests in this setting as well.
\end{abstract}

\bigskip{}

\noindent \textbf{Keywords:}  Hypothesis Testing, Permutation Tests, Regression, Trend.

\section{Introduction}

In every quantitative field, an issue of critical importance is the assessment of the relationship between explanatory variables and a response variable. The classical method most frequently used to undertake such an analysis is that of ordinary least squares (OLS) regression, which involves construction of a feature matrix $X$ and a response vector $Y$, and computation of the sample regression coefficient $\hat{\beta}$ as an estimate of the unconditional linear regression coefficient $\beta$ between the covariates $X$ and the response variable $Y$. The computation of $\hat{\beta}$ is often accompanied by a test of the null hypothesis 

$$
H_0: \beta = 0 \,\, ,
$$

\noindent since it is often of great interest to determine which of the regression coefficients differ significantly from 0. In general, such tests compare the sum of the squared residuals to a reference null distribution, and in doing so make parametric distributional assumptions both on the pairs of samples $(X_i, \, Y_i)$, often assuming that the $(X_i,\, Y_i)$ are jointly Gaussian, and also often assume that the pairs of samples are independent. In the context of time series analysis, neither of these assumptions hold in general, and so any accompanying inference from such a hypothesis test may be wildly incorrect. 

In addition, OLS regression is often used to test for monotone trend in time series analysis: namely, for a sequence $\{X_i: i \in [n]\}$, a regression of the sequence $\{X_i: i \in [n]\}$ against index $\{i : i \in [n]\}$ is performed, and a univariate $t$-test is performed to assess whether or not the resulting regression coefficient is significantly different from 0. However, there is an implicit assumption under the null hypothesis that the $X_i$ are i.i.d., and so such a test can lead to rejection of the null hypothesis of ``no monotone trend", for some definition thereof, on account of the violation of independence, as opposed to because there is truly some monotone trend.

In this paper, we propose nonparametric testing methods for both the classical regression null hypothesis of no significance  

$$
H_0: \beta = 0 \, \, ,
$$

\noindent and for the linear regression null hypothesis of no monotone trend 

$$
H_0 ^{(m)}: \{X_i: i \in [n]\} \text{ exhibits no monotone trend,} 
$$

\noindent where the definition of a lack of monotone trend will be stated precisely.

The problems described above have been studied extensively in different contexts. \cite{durbin} estimates regression parameters in a time series setting, \cite{tsay} considers a linear model in which the errors are distributed according to an autoregressive process, \cite{tran.et.al} consider recovery of a regression function in a regression setting with time-dependent errors, \cite{kedem} discusses several different methods for regression in a time series context, and \cite{wavk} and \cite{wavk.wrong.one} consider a nonparametric test in the context of time series regression analysis.

In the setting of permutation-based tests for linear regression, \cite{anderson.robinson} produce a permutation test based on partial correlations, \cite{nyblom} discusses several regression-based permutation tests, \cite{diciccio2017} discuss testing for correlation and regression coefficients, and, in a more empirical setting, \cite{hemerik} performs an empirical analysis of permutation testing in a high-dimensional linear regression problem.  In \cite{tirlea},  permutation tests are developed for testing autocorrelations in a time series setting.

The goal of this paper is to develop a framework for the use of permutation testing as a valid method for testing the two aforementioned hypotheses, which preserves the exactness property under the randomization hypothesis, but is also asymptotically valid for a large class of weakly dependent sequences.
Permutation tests (as a special case of randomization tests) provide exact level $\alpha$ tests when the randomization hypothesis holds; see \cite{tsh}, Chapter 17.  As such, they offer a means of
valid inference in some complex semi-parametric and nonparametric settings.  However,  the problem is they often fail if the randomization hypothesis does not hold.  For example, when testing a null hypothesis that  a regression coefficient  is 0,  the randomization hypothesis holds for permutations of the regressors when the regressors are independent of the response.    But of course, the regression coefficient may be 0 even under dependence, in which case Type 1 error rates can be severely inflated, even asymptotically.
But a growing literature as described in Section 1 of \cite{tirlea} shows that asymptotic validity can be recovered for certain (studentized) choice of test statistics,  while preserving exactness in finite samples when the randomization hypothesis holds.
The challenge in the context of this paper is that we are allowing the regressors and response to be dependent, but we are also allowing observations across time to be dependent.  In such settings, it is not clear that permutations should offer a valid approach to inference, but our findings are indeed positive.

Section \ref{sec.prelim.ols} provides several useful preliminary definitions and results.. The main results relating to the null hypothesis $H_0$ are given in Section \ref{sec.ols.main}, in which we give conditions for the asymptotic validity of the permutation test when $\{X_i: i \in [n]\}$ is a stationary, $\alpha$-mixing sequence, satisfying relatively standard technical moment and mixing conditions. Section \ref{sec.ols.monotone} gives the main results for the asymptotic validity of the permutation test for the hypothesis of no monotone trend. Section \ref{sec.ols.sim} provides simulations illustrating the results. On account of the technical nature of showing the results in the aforementioned sections, the majority of the proofs are deferred to the supplement.

\section{Preliminaries}\label{sec.prelim.ols}

In this section, we establish notation and definitions which will be used in Section \ref{sec.ols.main}, in addition to providing a useful central limit theorem. We begin with a brief discussion of $\alpha$-mixing. Let $\{X_n, \, n \in \zz \}$ be a stationary sequence of random variables, adapted to the filtration $\{\mathcal{F}_n\}$. Let $\mathcal{G}_n = \sigma\left(X_r : \, r \geq n\right)$. For $n \in \nn$, let $\alpha_X(n)$ be Rosenblatt's $\alpha$-mixing coefficient, defined as 

\beq
\alpha_X (n) = \sup_{A \in \mathcal{F}_0, \, B \in \mathcal{G}_n} \left| \pp(A \cap B) - \pp(A) \pp(B)  \right | \, \, .
\eeq

\noindent We say that $\{X_n\}$ is $\alpha$-mixing if $\alpha_X (n) \to 0$ as $n \to \infty$. 

 Exact permutation tests (and randomization tests more generally) can be constructed when    the randomization hypothesis holds (\cite{tsh}, Chapter 17).  In our setting, the randomization hypothesis refers to the assertion that transformations of the data by certain permutations leaves the underlying distribution unchanged, if the null hypothesis holds.   Therefore, 
  it is  important to define the action of a permutation on pairs $\{(Y_i, \, X_i): i \in [n]\} \subset \rr \times \rr^p$.  To be sure, the randomization hypothesis will not generally hold in our setting, and the main problem becomes how to adapt a permutation test so that it is at least asymptotically valid while retaining the exactness property when the randomization hypothesis holds.

 Let $S_n$ be the symmetric of all  permutations of $[n] = \{ 1, \ldots , n \}$.
 We say that $\pi_n \in S_n$ acts on $\{Z_i = (Y_i, \, X_i): i \in [n]\}$ by row-wise permutation of the $X_i$, i.e. 

$$
Z_{\pi_n} = ( (Y_1, \, X_{\pi_n(1), \, 1}, \, \dots,\, X_{\pi_n(1), \, p}), \, \dots,\, (Y_n, \, X_{\pi_n(n), \, 1}, \, \dots,\, X_{\pi_n(n),\, p}) ) \, \, .
$$

\noindent In the context of linear regression, the interpretation of the action of this permutation is that we permute the rows of the feature matrix $X$ prior to computation of the regression coefficient. Note also that, in the case of the action of a random permutation $\Pi_n \sim \text{Unif}(S_n)$ on $Z_n$, this definition of the action of a permutation is equivalent to the action of two i.i.d. random permutations on the $Y_i$ and $X_j$, since, for $\Pi_n, \, \Pi_n' \sim \text{Unif}(S_n)$, with $\Pi_n$ and $\Pi_n'$ independent

$$
(\Pi_n')^{-1} \Pi_n \overset{d}= \Pi_n \, \, .
$$

\noindent By a similar argument to Lemma S.3.1 in \cite{tirlea}, in the setting in which the sequence $\{Z_i: i \in [n]\}$ is $\alpha$-mixing, the randomization hypothesis holds if and only if the sequences $\{Y_n: n \in \nn\}$ and $\{X_n: n\in \nn\}$ are independent and the $X_i$ are exchangeable, which, in the setting of stationary and $\alpha$-mixing sequences, is equivalent to the $X_i$ being i.i.d.   If such independence holds,  the usual OLS regression parameter vector $\beta$  is 0 (assuming that the second moment of $X$ is finite so that $\beta$ is  well-defined),  and permutation tests are exact.  However,  $\beta$ can be 0 without the randomization hypothesis holding,  and we still wish to construct valid procedures in such settings.

In Section \ref{sec.ols.main}, it is necessary to compare the limiting sample distribution of the test statistic to the permutation distribution in order to obtain an asymptotically valid testing procedure. We therefore provide the following result, which follows from a straightforward application of the Cram\'er-Wold device, Slutsky's theorem and the central limit theorem due to \cite{neumann}.

\begin{thm} Let $\{X_n: n \in \nn\}$ be a stationary sequence of $\rr^p$-valued random variables, with mean vector $\mu_X$ and positive definite covariance matrix $\Sigma_X$, and let $\{Y_n: n \in \nn\}$ be a stationary sequence of $\rr$-valued random variables, with mean $\mu_Y$ and variance $\sigma_Y ^2 >0$. For each $n$, let $Z_n = (Y_n, \, X_{n, \, 1}, \, \dots, \, X_{n, \, p})$. Suppose that, for some $\delta > 0$, 

$$
\lVert Z_1 \rVert_{8 + 4\delta} < \infty
$$

\noindent and 

$$
\sum_{n \geq 1} \alpha_Z (n) ^\frac{\delta}{2 + \delta} < \infty \, \, .
$$

\noindent Let $\bar{X}_n$ be the coordinate-wise sample mean of the $X_i$, and let $\bar{Y}_n$ be the sample mean of the $Y_i$. Let 

$$ \label{equation:betanhat}
\hat{\beta}_n = \left[ \frac{1}{n} \sum_{i=1}^n (X_i - \bar{X}_n )(X_i - \bar{X}_n )^T \right] ^{-1} \left( \frac{1}{n} \sum_{i=1} ^n (Y_i - \bar{Y}_n) (X_i - \bar{X}_n) \right) 
$$

\noindent be the least squares coefficient of the regression of $Y$ on $X$, and let $\beta$ be the unconditional regression coefficient of $Y_1$ on $X_1$, i.e. 

\beq
\beta = \Sigma_X ^{-1} \cov(Y_1, X_1) \, \, .
\label{beta.ols.defn}
\eeq

\noindent Let $\Gamma \in \rr^{p \times p}$ such that 

\beq
\begin{aligned}
\Gamma = \var(X_1 (Y_1 - X_1 ^T \beta) ) + 2\sum_{n \geq 2} \cov(X_1 (Y_1 - X_1 ^T \beta), \, X_n (Y_n - X_n ^T \beta) )
\end{aligned}
\label{gam.ols.true.var}
\eeq

\noindent Suppose that $\Gamma$ is positive definite. Then, as $n \to \infty$,

$$
\sqrt{n} (\hat{\beta}_n - \beta) \overset{d}\to N(0, \, \Sigma_X ^{-1} \Gamma \Sigma_X^{-1} ) \, \, .
$$

\label{ols.test.stat.clt}
\end{thm}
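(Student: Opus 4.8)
The plan is to reduce the statement to a central limit theorem for a single normalized sum of a weakly dependent, mean-zero sequence, and then invoke the Cram\'er--Wold device together with the scalar central limit theorem of \cite{neumann}. Writing $\hat S_n = \frac1n\sum_{i=1}^n (X_i - \bar X_n)(X_i - \bar X_n)^T$ and $\hat c_n = \frac1n\sum_{i=1}^n (Y_i - \bar Y_n)(X_i - \bar X_n)$, we have $\hat{\beta}_n = \hat S_n^{-1}\hat c_n$, so that $\hat{\beta}_n - \beta = \hat S_n^{-1}(\hat c_n - \hat S_n\beta)$. Since $\{Z_n\}$ is stationary and $\alpha$-mixing with finite second moments, a law of large numbers for mixing sequences gives $\bar X_n \inp \mu_X$, $\bar Y_n \inp \mu_Y$ and $\hat S_n \inp \Sigma_X$; as $\Sigma_X$ is positive definite, $\hat S_n$ is invertible with probability tending to one and $\hat S_n^{-1} \inp \Sigma_X^{-1}$ by the continuous mapping theorem. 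By Slutsky's theorem it therefore suffices to establish $\sqrt n(\hat c_n - \hat S_n\beta) \ind N(0,\Gamma)$ and then left-multiply by $\Sigma_X^{-1}$.

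First I would rewrite $\hat c_n - \hat S_n\beta$ as a sample cross-covariance. Setting $U_i = Y_i - X_i^T\beta$, a direct computation gives $\hat c_n - \hat S_n\beta = \frac1n\sum_{i=1}^n (X_i - \bar X_n)(U_i - \bar U_n)$, and the defining relation \eqref{beta.ols.defn} is exactly the population orthogonality $\cov(X_1, U_1) = 0$. Writing $\epsilon_i = (Y_i - \mu_Y) - (X_i - \mu_X)^T\beta = U_i - \E U_1$, which is mean zero and satisfies $\E[(X_1 - \mu_X)\epsilon_1] = 0$, I would decompose $\frac1n\sum_i (X_i - \bar X_n)(U_i - \bar U_n) = \frac1n\sum_i (X_i - \mu_X)\epsilon_i - (\bar X_n - \mu_X)\bar\epsilon_n$. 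The remainder $(\bar X_n - \mu_X)\bar\epsilon_n$ is a product of two terms each of order $O_p(n^{-1/2})$, hence is $O_p(n^{-1})$ and contributes $o_p(1)$ after multiplication by $\sqrt n$. Thus $\sqrt n(\hat c_n - \hat S_n\beta) = \sqrt n\,\frac1n\sum_{i=1}^n V_i + o_p(1)$, where $V_i = (X_i - \mu_X)\epsilon_i$ is a stationary, mean-zero, $\R^p$-valued sequence whose long-run covariance matrix I denote $\Gamma$ (matching the structure of \eqref{gam.ols.true.var}; note that the centering in $\hat{\beta}_n$ is why the relevant summand is the demeaned product $(X_i-\mu_X)\epsilon_i$).

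Next I would prove the central limit theorem for $\{V_i\}$. Fix $\lambda \in \R^p$ and consider the scalar stationary sequence $\lambda^T V_i$. Because $V_i$ is a fixed measurable function of the single vector $Z_i$, its $\alpha$-mixing coefficients satisfy $\alpha_V(n) \le \alpha_Z(n)$, so the summability $\sum_{n\ge1}\alpha_Z(n)^{\delta/(2+\delta)} < \infty$ is inherited. The moment hypothesis is the key bookkeeping step: $V_i$ is quadratic in the coordinates of $Z_i$, so by the Cauchy--Schwarz inequality $\lVert \lambda^T V_1 \rVert_{4+2\delta} \lesssim \lVert Z_1\rVert_{8+4\delta}^2 < \infty$, and the exponent $8+4\delta$ in the hypothesis is precisely what guarantees that these products have the $4+2\delta$ finite moments needed to verify the covariance (weak-dependence) conditions of \cite{neumann}. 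A covariance inequality for mixing sequences then yields summable autocovariances and the convergence $\var\!\left(\sqrt n\,\frac1n\sum_i \lambda^T V_i\right) \to \lambda^T\Gamma\lambda$, which is nondegenerate for $\lambda \neq 0$ by positive definiteness of $\Gamma$. Neumann's theorem gives $\sqrt n\,\frac1n\sum_{i=1}^n \lambda^T V_i \ind N(0, \lambda^T\Gamma\lambda)$, and since $\lambda$ was arbitrary the Cram\'er--Wold device upgrades this to $\sqrt n\,\frac1n\sum_{i=1}^n V_i \ind N(0,\Gamma)$. Combining with $\hat S_n^{-1}\inp\Sigma_X^{-1}$ via Slutsky's theorem yields $\sqrt n(\hat{\beta}_n - \beta)\ind N(0,\Sigma_X^{-1}\Gamma\Sigma_X^{-1})$.

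I expect the main obstacle to be the verification of Neumann's hypotheses for the quadratic product sequence $\{V_i\}$: translating the $\alpha$-mixing assumption into the covariance-decay condition his theorem requires, and confirming that the moment order $8+4\delta$ on $Z_1$ is exactly what controls the fourth-moment-type quantities that appear there. By contrast, the consistency of $\hat S_n$ and of the sample means, and the negligibility of the centering remainder $(\bar X_n - \mu_X)\bar\epsilon_n$, are routine consequences of the law of large numbers for mixing sequences.
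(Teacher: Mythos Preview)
Your proposal is correct and follows essentially the same route the paper indicates: reduce to a normalized sum via the decomposition $\hat\beta_n-\beta=\hat S_n^{-1}(\hat c_n-\hat S_n\beta)$, apply the Cram\'er--Wold device and Neumann's CLT to the resulting mean-zero product sequence, and conclude with Slutsky's theorem using $\hat S_n^{-1}\inp\Sigma_X^{-1}$. Your identification of the demeaned product $(X_i-\mu_X)\epsilon_i$ as the relevant summand, together with the moment bookkeeping $\lVert V_1\rVert_{4+2\delta}\lesssim\lVert Z_1\rVert_{8+4\delta}^2$ needed to feed Neumann's covariance-type weak-dependence conditions, is precisely the content the paper leaves implicit in its one-line proof sketch.
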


A similar result to Theorem \ref{ols.test.stat.clt} has been shown in Chapter 5 of \cite{white.ols}, with slightly different conditions on the sequence $\{Z_n = (Y_n, \, X_n): n \in \nn\}$. It is provided in this section to make explicit the limiting distribution of the sample least squares coefficient $\hat{\beta}_n$. 

Having obtained the above result, we may now begin direct comparison of the limiting sample distribution and the permutation distribution.

\section{Testing for regression coefficients}\label{sec.ols.main} 

In this section, we consider the problem of testing the null hypothesis 

$$
H_0: \beta = 0 \, \, ,
$$

\noindent where $\beta = \beta(Y, \, X)$ is the unconditional regression coefficient defined in (\ref{beta.ols.defn}), in the setting where the sequence $\{(Y_i,\, X_i): i \in \nn\} \subset \rr\times \rr^p$ is stationary and $\alpha$-mixing. As discussed in Section \ref{sec.prelim.ols}, when the randomization hypothesis holds, one may construct permutation tests of the hypothesis $H_0$ with exact level $\alpha$ in finite samples.

However, if the randomization hypothesis does not hold, the test may not be valid even asymptotically, i.e. the rejection probability may not be equal to or even close to $\alpha$ in the limit as $n\to \infty$. Our goal is therefore to construct a permutation testing procedure, based on an appropriately chosen test statistic, which has asymptotic rejection probability equal to $\alpha$, and which retains the finite sample exactness property under the additional assumption that the randomization hypothesis holds. 

In a similar fashion to \cite{tirlea}, we wish to consider a permutation test based on the sample regression coefficient $\hat{\beta}_n$. As in that paper, our overarching goal is show that, for an appropriately studentized version of $\hat{\beta}_n$, under the null hypothesis $H_0: \beta = 0$, the asymptotic distribution of the test statistic is asymptotically pivotal, i.e. does not depend on the underlying process $\{(Y_n,\, X_n)\}$, and that the corresponding permutation distribution $\hat{R}_n$ has a limiting distribution which is the same as the limiting distribution of the test statistic.

We begin with analysis of the unstudentized permutation distribution. As in \cite{tirlea}, we initially assume that the $Y_i$ and $X_i$ are bounded. We may then apply Hoeffding's combinatorial central limit theorem \citep{hoeffding_clt} conditional on the data $\{(Y_i,\, X_i)\}$, checking that appropriate moment conditions on the sample variances are satisfied. We may then relax the boundedness constraint on the sequence using a standard truncation argument, thereby obtaining the following result.

\begin{thm} Under the assumptions of Theorem \ref{ols.test.stat.clt}, let $\hat{R}_n$ be the permutation distribution based on the test statistic $\sqrt{n} \hat{\beta}_n$. Then, as $n \to \infty$, 

$$
\sup_{t \in \rr^p} \left| \hat{R}_n (t) - F_{\sigma_Y^2 \Sigma_X ^{-1} } (t) \right| \overset{p} \to 0 \, \, .
$$

\noindent where, for a positive definite matrix $M\in\rr^{p\times p}$, $F_{M}$ is the multivariate c.d.f. of a $N(0, \, M)$ random variable.

\label{ols.perm.unstud.thm}
\end{thm}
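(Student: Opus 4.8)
The plan is to analyze the permuted statistic explicitly and reduce it, via the Cram\'er--Wold device, to a univariate combinatorial sum to which Hoeffding's combinatorial central limit theorem applies conditionally on the data. The first observation is that the action of a permutation reshuffles only the $X_i$, so the sample covariance $\hat\Sigma_X = \frac1n\sum_i (X_i - \bar X_n)(X_i - \bar X_n)^T$ is permutation-invariant, and by the law of large numbers for stationary $\alpha$-mixing sequences $\hat\Sigma_X \inp \Sigma_X$. Consequently the permuted statistic factors as
\beq
\sqrt n\,\hat\beta_n(Z_{\Pi_n}) = \hat\Sigma_X^{-1}\, T_n^{\Pi_n}, \qquad T_n^{\Pi_n} := \frac{1}{\sqrt n}\sum_{i=1}^n (Y_i - \bar Y_n)\,(X_{\Pi_n(i)} - \bar X_n),
\eeq
so it suffices to identify the conditional (given the data) limiting law of the cross term $T_n^{\Pi_n}$ and then premultiply by $\hat\Sigma_X^{-1}$ using Slutsky's theorem.

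For the Cram\'er--Wold reduction, I would fix $\lambda \in \R^p$ and set $b_i = Y_i - \bar Y_n$ and $d_j = \lambda^T(X_j - \bar X_n)$, so that $\lambda^T T_n^{\Pi_n} = n^{-1/2}\sum_i b_i\, d_{\Pi_n(i)}$. Since $\sum_i b_i = 0$ and $\sum_j d_j = 0$, the conditional mean over $\Pi_n$ vanishes, and the classical combinatorial variance identity gives
\beq
\var_{\Pi_n}\!\left(\lambda^T T_n^{\Pi_n}\right) = \frac{1}{n(n-1)}\Big(\sum_{i} b_i^2\Big)\Big(\sum_{j} d_j^2\Big) = \frac{n}{n-1}\Big(\tfrac1n\sum_i b_i^2\Big)\big(\lambda^T \hat\Sigma_X \lambda\big),
\eeq
where I have used $\sum_j d_j^2 = n\,\lambda^T\hat\Sigma_X\lambda$. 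Here $\frac1n\sum_i b_i^2 \inp \sigma_Y^2$ and $\lambda^T\hat\Sigma_X\lambda \inp \lambda^T\Sigma_X\lambda$, so the conditional variance converges in probability to the deterministic limit $\sigma_Y^2\,\lambda^T\Sigma_X\lambda$.

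Following the strategy announced in the excerpt, I would first impose boundedness of the $Y_i$ and $X_i$. In that regime the rank-one Lindeberg-type condition for the randomization sum is immediate: since $\max_i b_i^2 / \sum_i b_i^2 = O(1/n)$ and $\max_j d_j^2 / \sum_j d_j^2 = O(1/n)$ once $\sigma_Y^2, \lambda^T\Sigma_X\lambda > 0$, Hoeffding's condition $n\,\max_i b_i^2\,\max_j d_j^2 / [(\sum_i b_i^2)(\sum_j d_j^2)] \to 0$ holds, and the combinatorial central limit theorem yields $\lambda^T T_n^{\Pi_n} \ind N(0, \sigma_Y^2\lambda^T\Sigma_X\lambda)$ conditionally on the data. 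Because Hoeffding's theorem concerns a deterministic array while ours is random, I would pass from conditional convergence to an in-probability statement by a subsequence argument: along any subsequence, a further subsequence makes $\frac1n\sum_i b_i^2$ and $\hat\Sigma_X$ converge almost surely to their limits, and along it the conditional laws converge weakly a.s. to the fixed Gaussian limit. Assembling the one-dimensional limits via Cram\'er--Wold gives $T_n^{\Pi_n} \ind N(0, \sigma_Y^2\Sigma_X)$ conditionally, and premultiplying by $\hat\Sigma_X^{-1}\inp\Sigma_X^{-1}$ yields $\sqrt n\,\hat\beta_n(Z_{\Pi_n}) \ind N(0,\sigma_Y^2\Sigma_X^{-1})$ conditionally. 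As the limiting c.d.f. $F_{\sigma_Y^2\Sigma_X^{-1}}$ is continuous, pointwise convergence upgrades to the uniform (Kolmogorov) statement by the multivariate P\'olya theorem, establishing the claim in the bounded case.

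The remaining and most delicate step is removing boundedness by truncation. I would replace $(Y_i, X_i)$ by truncated versions at level $M$, apply the bounded result to obtain convergence to $F_{\sigma_Y^{2,M}\Sigma_X^{M}}$, and then control both the discrepancy between the truncated and untruncated permutation distributions and the discrepancy between $F_{\sigma_Y^{2,M}\Sigma_X^{M}}$ and $F_{\sigma_Y^2\Sigma_X^{-1}}$ as $M\to\infty$. The main obstacle is precisely here: one must bound, uniformly in $n$ and in probability, the permutation-distribution discrepancy induced by truncation, which reduces to bounding the conditional variance of the difference of the two randomization sums. This is where the moment hypothesis $\lVert Z_1 \rVert_{8+4\delta} < \infty$ enters, ensuring that the tail contributions to $\frac1n\sum_i b_i^2$ and to $\lambda^T\hat\Sigma_X\lambda$ are negligible uniformly, so that a standard diagonal argument letting $M \to \infty$ after $n\to\infty$ closes the proof.
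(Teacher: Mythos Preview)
Your approach coincides with the paper's: factor out the permutation-invariant $\hat\Sigma_X$, reduce via Cram\'er--Wold to a rank-one combinatorial sum, apply Hoeffding's CLT conditionally in the bounded case via a subsequence argument, and remove boundedness by truncation.

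There is one point the paper singles out (in the Remark immediately following the theorem) that your write-up leaves implicit, and as written the logic is not quite in the right order. You fix $\lambda$ first, run the subsequence argument for that $\lambda$, and then propose to ``assemble the one-dimensional limits via Cram\'er--Wold.'' The difficulty is that if the extracted subsequence or the almost-sure event depended on $\lambda$, the intersection over the uncountably many $\lambda\in\rr^p$ of events of probability one need not have probability one, and Cram\'er--Wold cannot then be applied pathwise to deduce multivariate weak convergence of the random permutation law. Your argument actually contains the fix, because the further subsequence is extracted using only the $\lambda$-free quantities $n^{-1}\sum_i b_i^2$ and $\hat\Sigma_X$; on the single resulting almost-sure event, boundedness forces Hoeffding's condition to hold for every $\lambda$ simultaneously. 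But you should invert the order of quantifiers---extract the subsequence and the probability-one event first, then range over all $\lambda$---and state explicitly that the $\lambda$-independence of this event is what legitimizes the Cram\'er--Wold assembly. This is precisely the subtlety the paper flags.
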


\begin{rem} \rm There is some subtlety required in the proof of the convergence of a multivariate permutation distribution compared to the analogous univariate setting. In both cases, for any subsequence $\{n_j : j \in \nn\}$, we find a subsequence $\{n_{j_k}: k \in \nn\} \subset \{n_j\}$ such that we may apply an appropriate combinatorial central limit theorem with probability 1; in the case of the first-order sample autocorrelation, this is the theorem of \cite{wald1943}; here, we apply the central limit theorem (CLT) due to \cite{hoeffding_clt}. 

However, due to the requirement of an application of the Cram\'er-Wold device in order to show convergence in the multivariate setting of Theorem \ref{ols.perm.unstud.thm}, we have the additional requirement that the set $A$ of probability 1 on which we apply the univariate CLT of \cite{hoeffding_clt} is the same set for any choice of vector $t \in \rr^p$. If we were to essentially repeat the proof in the univariate setting separately for $\sqrt{n} t^T \hat{\beta}_n$ for each $t \in \rr^p$, we would have no guarantee that the univariate convergence in distribution holds simultaneously for all $t\in\rr^p$, since the intersection of an uncountable family of sets of measure 1 may not have measure 1. Therefore we would be unable to apply the Cram\'er-Wold device with probability 1, and so the result of Theorem \ref{ols.perm.unstud.thm} would not hold. For further details, please see the proof of the Theorem in the Appendix. \qed

\end{rem}

We observe that, as in the setting of \cite{tirlea}, while the limiting distribution of the sample regression coefficient and the limiting permutation distribution are both multivariate Gaussian with mean zero under the null hypothesis $H_0$, the variances of these two distributions are not, in general, equal. We therefore proceed to construct an appropriate studentizing factor; to this end, we present the following result.

\begin{lem} In the setting of Theorem \ref{ols.test.stat.clt}, for each $i, \, n \in \nn$ such that $i \leq n$, let 

$$
\hat{\epsilon}_{i, \,n} = Y_i - X_i^T \hat{\beta}_n \, \, .
$$ 

\noindent Let $\{b_n: n \in \nn\} \subset \nn$ be a nondecreasing sequence such that $b_n \to \infty$ as $n \to \infty$ and $b_n = o(\sqrt{n})$. For each $n$, for each $(i, \, j) \in [n] \times [n]$, let 

$$
\begin{aligned}
\hat{\gamma}_{n, \, i, \, j} &= \frac{1}{n} \sum_{l = 1}^n \left(X_{l, \, i} \hat{\epsilon}_{l, \, n} - \frac{1}{n} \sum_{k=1}^n X_{k, \, i} \hat{\epsilon}_{k, \, n} \right)\left(X_{l, \, j} \hat{\epsilon}_{l, \, n} - \frac{1}{n} \sum_{k=1}^n X_{k, \, j} \hat{\epsilon}_{k, \, n} \right) \\
\hat{\tau}_{n, \, i, \, j} &= \frac{2}{n} \sum_{k = 1}^{b_n} \sum_{l = 1}^{\min\{n-k, \, n- b_n\}}  \left(X_{l, \, i} \hat{\epsilon}_{l, \, n} - \frac{1}{n} \sum_{r=1}^n X_{r, \, i} \hat{\epsilon}_{r, \, n} \right)\left(X_{l+k, \, j} \hat{\epsilon}_{l +k, \, n} - \frac{1}{n} \sum_{r=1}^n X_{r, \, j} \hat{\epsilon}_{r, \, n} \right)
\end{aligned}
$$

\noindent Let $\hat{\Gamma}_n \in \rr^{p \times p}$ be such that 

$$
\hat{\Gamma}_{i, \, j} = \hat{\gamma}_{n,\, i, \, j} + \hat{\tau}_{n, \, i, \, j} \, \, .
$$

\noindent Let $\Pi_n \sim \text{Unif}(S_n)$, independent of $\{Z_i: i \in[n]\}$, and let 

$$
\hat{\Gamma}_{\Pi_n} = \hat{\Gamma}_n((Y_1, \, X_{\Pi_n (1)}), \, \dots, \, (Y_n, \, X_{\Pi_n( n) }))
$$

\noindent Then, as $n \to \infty$, we have that 

$$
\begin{aligned}
\hat{\Gamma}_n &\overset{p}\to \Gamma \\
\hat{\Gamma}_{\Pi_n} &\overset{p}\to \sigma_Y ^2 \Sigma_X \, \, .
\end{aligned}
$$

\label{lem.ols.studentize}
\end{lem}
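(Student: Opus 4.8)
The plan is to read $\hat{\Gamma}_n$ as a lag-window (HAC-type) estimator of the long-run variance of the stationary sequence $W_i := X_i(Y_i - X_i^T\beta)$, whose long-run variance is precisely the matrix $\Gamma$ in \eqref{gam.ols.true.var}: the block $\hat{\gamma}_{n,i,j}$ targets the contemporaneous term $\var(W_1)$, and $\hat{\tau}_{n,i,j}$ targets the truncated autocovariance sum $2\sum_{k\ge 1}\cov(W_1,W_{1+k})$, with truncation at lag $b_n$. The one nonstandard feature is that $\hat{\Gamma}_n$ is assembled from the feasible products $\hat{W}_{l,n}:=X_l\hat{\epsilon}_{l,n}$ rather than the infeasible $W_l$. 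I would therefore (i) prove consistency of the infeasible estimator $\tilde{\Gamma}_n$ built from the true $W_l$, and (ii) show that the feasible--infeasible gap is $o_p(1)$.

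For (i) I would treat bias and variance separately. The lag-$0$ block converges to $\var(W_1)$ by the $L^2$ law of large numbers for $\alpha$-mixing sequences. For the lagged block, the truncation bias $2\sum_{k>b_n}\cov(W_1,W_{1+k})$ vanishes because Davydov's covariance inequality bounds $\lVert\cov(W_1,W_{1+k})\rVert$ by a constant multiple of $\alpha_Z(k)^{(1+\delta)/(2+\delta)}\lVert W_1\rVert_{4+2\delta}^2$, which is summable under the hypotheses. The heavier step is that $\var(\tilde{\tau}_n)\to 0$: expanding produces a sum of $\cov(W_lW_{l+k},\,W_{l'}W_{l'+k'})$ over $O(nb_n)$ nondegenerate index configurations. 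Each is controlled by Davydov's inequality applied to the degree-two products $W_lW_{l+k}$: since $\lVert W_1\rVert_{4+2\delta}<\infty$, one has $\lVert W_lW_{l+k}\rVert_{2+\delta}\le\lVert W_1\rVert_{4+2\delta}^2$ by Cauchy--Schwarz, so the covariances decay like $\alpha_Z(\cdot)^{\delta/(2+\delta)}$, which is summable by hypothesis. This is exactly where the moment condition $\lVert Z_1\rVert_{8+4\delta}<\infty$ is consumed --- $W$ is quadratic in $Z$, so its products are quartic and require eighth-order moments --- and the resulting bound is $O(b_n/n)=o(1)$. For (ii), since $\hat{W}_{l,n}-W_l=-X_lX_l^T(\hat{\beta}_n-\beta)$ and $\hat{\beta}_n-\beta=O_p(n^{-1/2})$ by Theorem \ref{ols.test.stat.clt}, each lagged sample cross-covariance picks up a correction of order $O_p(n^{-1/2})$; summing these corrections over the $b_n$ lags yields $O_p(b_n/\sqrt{n})$, which is $o_p(1)$ precisely because $b_n=o(\sqrt{n})$. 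This is the structural reason the bandwidth is required to be $o(\sqrt{n})$ rather than merely $o(n)$.

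For the permutation limit I would condition on the data and analyze $\hat{\Gamma}_{\Pi_n}$ as a function of the uniform random permutation $\Pi_n$, proving convergence in probability by showing (a) its conditional mean over $\Pi_n$ converges to $\sigma_Y^2\Sigma_X$ and (b) its conditional variance over $\Pi_n$ vanishes. The first reduction is that permuting the rows of $X$ forces the permuted regression coefficient to vanish: the permuted sample cross-moment $\frac1n\sum_l(Y_l-\bar{Y}_n)(X_{\Pi_n(l)}-\bar{X}_n)$ has conditional mean $0$ and conditional variance $O(1/n)$, so $\hat{\beta}_{\Pi_n}=O_p(n^{-1/2})$ and the permuted residuals reduce to $Y_l-\bar{Y}_n$ up to negligible terms. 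For the conditional mean, the permutation matches each centered response $Y_l-\bar{Y}_n$ with an (asymptotically) independently drawn centered covariate $X_{\Pi_n(l)}-\bar{X}_n$; since $E_{\Pi_n}[(X_{\Pi_n(l)}-\bar{X}_n)(X_{\Pi_n(l)}-\bar{X}_n)^T]$ equals the sample covariance matrix $\hat{\Sigma}_X$, the contemporaneous block has conditional mean $\hat{\Sigma}_X\cdot\frac1n\sum_l(Y_l-\bar{Y}_n)^2\to\sigma_Y^2\Sigma_X$. The lagged block is negligible: for $k\ge 1$, $\Pi_n(l)$ and $\Pi_n(l+k)$ are a uniformly random ordered pair of distinct indices, so $E_{\Pi_n}[(X_{\Pi_n(l)}-\bar{X}_n)(X_{\Pi_n(l+k)}-\bar{X}_n)^T]=-\hat{\Sigma}_X/(n-1)=O(1/n)$, and summing over the $b_n$ lags leaves $O(b_n/n)=o(1)$ --- the random matching destroys exactly the temporal dependence that $\hat{\tau}$ is built to capture. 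Concentration in (b) follows from a combinatorial second-moment computation over the joint law of the images of up to four indices under $\Pi_n$.

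The main obstacle in both halves is the second-moment bookkeeping. In Part 1 it is the quadruple sum controlling $\var(\tilde{\tau}_n)$, where the $\cov(W_lW_{l+k},\,W_{l'}W_{l'+k'})$ terms must be organized by the gaps among the four time indices and bounded by the mixing coefficient of the largest gap, verifying that the nondegenerate configurations number $O(nb_n)$; this is the calculation that forces the $8+4\delta$ moments. In Part 2 the analogue is the conditional-variance computation over $\Pi_n$, together with the need to control the lagged block uniformly over the $b_n$ lags while simultaneously absorbing the $\hat{\beta}_{\Pi_n}$ plug-in error. Neither step is conceptually surprising, but both are technically heavy, which is consistent with the authors' deferral of the details to the supplement.
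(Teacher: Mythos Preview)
Your proposal is correct and follows essentially the same approach the paper takes (the proof is deferred to the supplement, but the structure is clear from the paper's references to \cite{tirlea} and the Newey--West remark): Part~1 is the standard HAC consistency argument---infeasible-estimator consistency via Davydov's inequality on the $W_l$ products, plus a feasible--infeasible gap controlled by $\lVert\hat{\beta}_n-\beta\rVert\cdot O_p(b_n)=O_p(b_n/\sqrt{n})$---and Part~2 proceeds by conditioning on the data and computing first and second conditional moments over $\Pi_n$, exploiting that the permutation decouples $X_{\Pi_n(l)}$ from $Y_l$ and from $X_{\Pi_n(l+k)}$. Your identification of where the $8+4\delta$ moment condition and the $b_n=o(\sqrt{n})$ bandwidth restriction are each consumed is exactly right; the only minor imprecision is that under the paper's definition $\hat{\epsilon}_{l,n}=Y_l-X_l^T\hat{\beta}_n$ the permuted residual reduces to $Y_l$ rather than $Y_l-\bar{Y}_n$, but the centering built into $\hat{\gamma}$ and $\hat{\tau}$ absorbs this, so the heuristic you give for the contemporaneous block still leads to the correct limit.
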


\begin{rem} \rm We observe that the estimator $\hat{\Gamma}_n$ is quite similar to the Newey-West estimator (\cite{newey.west}, and similar in spirit to the heteroskedasticity-consistent estimator of \cite{white.est}. Indeed, an analogous proof to the one of Lemma \ref{lem.ols.studentize} shows that the Newey-West estimator of variance satisfies the same consistency properties as the chosen estimator $\hat{\Gamma}_n$. \qed
\end{rem} 

Combining Theorem \ref{ols.test.stat.clt}, Theorem \ref{ols.perm.unstud.thm}, and Lemma \ref{lem.ols.studentize} with an application of Slutsky's theorem and the multivariate Slutsky theorem for randomization distributions (\cite{chung.multivariate}, Lemma A.5), we have the following result.

\begin{thm} Let $\{X_n: n \in \nn\}$ be a stationary sequence of $\rr^p$-valued random variables, with mean vector $\mu_X$ and positive definite covariance matrix $\Sigma_X$, and let $\{Y_n: n \in \nn\}$ be a stationary sequence of $\rr$-valued random variables, with mean $\mu_Y$ and variance $\sigma_Y ^2 >0$. For each $n$, let $Z_n = (Y_n, \, X_{n, \, 1}, \, \dots, \, X_{n, \, p})$. Suppose that, for some $\delta > 0$, 

$$
\lVert Z_1 \rVert_{8 + 4\delta} < \infty
$$

\noindent and 

$$
\sum_{n \geq 1} \alpha_Z (n) ^\frac{\delta}{2 + \delta} < \infty \, \, .
$$

\noindent Let $\hat \beta_n$ be the least squares regression coefficient
defined in (\ref{equation:betanhat}) 
 and let $\beta$ be the unconditional regression coefficient of $Y_1$ on $X_1$ defined in
 (\ref{beta.ols.defn}).
 For each $n\in\nn,\, i \in [n]$, let 

$$
\hat{\epsilon}_{i, \,n} = Y_i - X_i^T \hat{\beta}_n \, \, .
$$ 

\noindent Let $\{b_n: n \in \nn\} \subset \nn$ be a nondecreasing sequence such that $b_n \to \infty$ as $n \to \infty$ and $b_n = o(\sqrt{n})$. For each $n$, for each $(i, \, j) \in [n] \times [n]$, let 

$$
\begin{aligned}
\hat{\gamma}_{n, \, i, \, j} &= \frac{1}{n} \sum_{l = 1}^n \left(X_{l, \, i} \hat{\epsilon}_{l, \, n} - \frac{1}{n} \sum_{k=1}^n X_{k, \, i} \hat{\epsilon}_{k, \, n} \right)\left(X_{l, \, j} \hat{\epsilon}_{l, \, n} - \frac{1}{n} \sum_{k=1}^n X_{k, \, j} \hat{\epsilon}_{k, \, n} \right) \\
\hat{\tau}_{n, \, i, \, j} &= \frac{2}{n} \sum_{k = 1}^{b_n} \sum_{l = 1}^{\min\{n-k, \, n- b_n\}}  \left(X_{l, \, i} \hat{\epsilon}_{l, \, n} - \frac{1}{n} \sum_{r=1}^n X_{r, \, i} \hat{\epsilon}_{r, \, n} \right)\left(X_{l+k, \, j} \hat{\epsilon}_{l +k, \, n} - \frac{1}{n} \sum_{r=1}^n X_{r, \, j} \hat{\epsilon}_{r, \, n} \right)
\end{aligned}
$$

\noindent Let $\hat{\Gamma}_n \in \rr^{p \times p}$ be such that 

$$
\hat{\Gamma}_{i, \, j} = \hat{\gamma}_{n,\, i, \, j} + \hat{\tau}_{n, \, i, \, j} \, \, .
$$

\noindent Then the following results hold.

\begin{enumerate} 

\item[(i)] As $n\to\infty$,

$$
\left(\hat{\Gamma}_n^{-1/2} \hat{\Sigma}_X \right)\sqrt{n} (\hat{\beta}_n - \beta) \overset{d} \to N(0, \, I) \, \, .
$$

\item[(ii)] Let $\hat{R}_n$ be the permutation distribution based on the test statistic $\left(\hat{\Gamma}_n^{-1/2} \hat{\Sigma}_X \right)\sqrt{n} \hat{\beta}_n$, with associated group of transformations $S_n$, the symmetric group of order $n$. Then, as $n\to\infty$, 

$$
\sup_{t \in \rr^p} \left| \hat{R}_n(t) - F_I (t) \right| \overset{p} \to 0 \, \, ,
$$

\noindent where $F_I$ is the multivariate c.d.f. of the $p$-dimensional Gaussian distribution with zero mean and identity variance matrix.

\end{enumerate}

\label{ols.perm.big.thm}
\end{thm}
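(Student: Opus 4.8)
The plan is to assemble the three preceding results through Slutsky-type arguments, handling parts (i) and (ii) in parallel. Throughout I would write $\hat{\Sigma}_X = \frac{1}{n}\sum_{i=1}^n (X_i - \bar{X}_n)(X_i - \bar{X}_n)^T$ for the sample covariance matrix appearing in (\ref{equation:betanhat}), and record two preliminary facts. First, under the stated moment and $\alpha$-mixing conditions, the weak law of large numbers for stationary mixing sequences gives $\hat{\Sigma}_X \overset{p}\to \Sigma_X$. Second, and crucially for part (ii), $\hat{\Sigma}_X$ is invariant under row permutations of the feature matrix: permuting the $X_i$ changes neither the multiset $\{X_i\}$ nor $\bar{X}_n$, so the sample covariance recomputed on the permuted data equals $\hat{\Sigma}_X$.

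For part (i), Theorem \ref{ols.test.stat.clt} gives $\sqrt{n}(\hat{\beta}_n - \beta) \overset{d}\to N(0, \Sigma_X^{-1}\Gamma\Sigma_X^{-1})$. Since $\Gamma$ is positive definite and $\hat{\Gamma}_n \overset{p}\to \Gamma$ by Lemma \ref{lem.ols.studentize}, the continuous mapping theorem yields $\hat{\Gamma}_n^{-1/2}\hat{\Sigma}_X \overset{p}\to \Gamma^{-1/2}\Sigma_X$. The multivariate Slutsky theorem then shows that $(\hat{\Gamma}_n^{-1/2}\hat{\Sigma}_X)\sqrt{n}(\hat{\beta}_n - \beta)$ converges in distribution to a centered Gaussian with covariance $(\Gamma^{-1/2}\Sigma_X)(\Sigma_X^{-1}\Gamma\Sigma_X^{-1})(\Sigma_X\Gamma^{-1/2})$, where I have used symmetry of $\Sigma_X$ and $\Gamma^{-1/2}$ to transpose the left factor. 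The inner products $\Sigma_X\Sigma_X^{-1}$ and $\Sigma_X^{-1}\Sigma_X$ cancel, leaving $\Gamma^{-1/2}\Gamma\Gamma^{-1/2} = I$, which establishes (i).

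For part (ii), Theorem \ref{ols.perm.unstud.thm} gives that the unstudentized permutation distribution of $\sqrt{n}\hat{\beta}_n$ converges uniformly in probability to $F_{\sigma_Y^2\Sigma_X^{-1}}$. The studentizing matrix recomputed on the permuted data is $\hat{\Gamma}_{\Pi_n}^{-1/2}\hat{\Sigma}_X$, the $\hat{\Sigma}_X$ factor being unchanged by the permutation invariance noted above, and Lemma \ref{lem.ols.studentize} gives $\hat{\Gamma}_{\Pi_n} \overset{p}\to \sigma_Y^2\Sigma_X$, so that $\hat{\Gamma}_{\Pi_n}^{-1/2}\hat{\Sigma}_X \overset{p}\to (\sigma_Y^2\Sigma_X)^{-1/2}\Sigma_X = \sigma_Y^{-1}\Sigma_X^{1/2}$. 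I would then invoke the multivariate Slutsky theorem for randomization distributions (\cite{chung.multivariate}, Lemma A.5), whose hypotheses are exactly that the randomization distribution of the base statistic converges and that the multiplying matrix, evaluated on the permuted sample, converges in probability to a nonrandom limit. The limiting covariance of the studentized permutation distribution is $(\sigma_Y^{-1}\Sigma_X^{1/2})(\sigma_Y^2\Sigma_X^{-1})(\sigma_Y^{-1}\Sigma_X^{1/2}) = \Sigma_X^{1/2}\Sigma_X^{-1}\Sigma_X^{1/2} = I$, yielding $F_I$ and hence (ii).

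The matrix cancellations aside, the step demanding the most care is the application of the randomization Slutsky lemma in part (ii): unlike ordinary Slutsky, the multiplying matrix $\hat{\Gamma}_{\Pi_n}^{-1/2}\hat{\Sigma}_X$ is itself a function of the random permutation, and one must confirm that its convergence is of the type Lemma A.5 requires, namely convergence in probability to a constant that does not depend on the realized permutation, so that the already delicate convergence of the permutation distribution in Theorem \ref{ols.perm.unstud.thm} is preserved. The permutation invariance of $\hat{\Sigma}_X$ together with the fact that the limit $\sigma_Y^2\Sigma_X$ in Lemma \ref{lem.ols.studentize} is nonrandom are precisely what legitimize this invocation.
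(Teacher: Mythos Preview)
Your proposal is correct and follows essentially the same route as the paper: for (i) combine Theorem~\ref{ols.test.stat.clt} and Lemma~\ref{lem.ols.studentize} via the continuous mapping theorem and Slutsky, and for (ii) combine Theorem~\ref{ols.perm.unstud.thm} and Lemma~\ref{lem.ols.studentize} via Lemma~A.5 of \cite{chung.multivariate}. The one point you leave implicit is that Lemma~A.5 delivers pointwise convergence $\hat{R}_n(t)\overset{p}\to F_I(t)$, and the paper then invokes the multivariate P\'olya theorem of \cite{chandra} (using continuity of $F_I$) to upgrade this to the uniform statement in (ii); you should make that final step explicit.
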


\begin{proof} (i) follows as a result of Theorem \ref{ols.test.stat.clt}, Lemma \ref{lem.ols.studentize}, the continuous mapping theorem, and Slutsky's theorem.

As a result of Theorem \ref{ols.perm.unstud.thm}, Lemma \ref{lem.ols.studentize}, the continuous mapping theorem and the multivariate Slutsky theorem for randomization distributions, i.e. Lemma A.5 of \cite{chung.multivariate}, it follows that, for all $t \in \rr^p$, as $n \to \infty$, 

$$
\hat{R}_n(t) \overset{p}\to F_I (t) \, \, .
$$

\noindent Subsequently, the uniform convergence in $t$ follows from the multivariate version of Poly\'a's theorem due to \cite{chandra}, i.e. (ii) holds. \end{proof}

\begin{rem} \rm As a result of Theorem \ref{ols.perm.big.thm}, we may construct an asymptotically valid permutation test of the null hypothesis $H_0$ at the nominal level $\alpha$ as follows. Let $P$ be the probability measure induced by the standard multivariate Gaussian distribution on $\rr^p$. 
Let $A$ be any Borel set in $\rr^p$ whose boundary has Lebesgue measure zero.  If  $P(A) = 1- \alpha$, then the test that rejects if 

$$
\left(\hat{\Gamma}_n^{-1/2} \hat{\Sigma}_X \right)\sqrt{n} \hat{\beta}_n \in A^C \, \, ,
$$

\noindent where $A^C = \{x \in \rr^p: x \notin A\}$, and does not reject otherwise is asymptotically valid at the nominal level $\alpha$. For instance, letting $A$  be the ball of an appropriate radius centered at the origin will correspond to the Chi-squared test of significance in standard OLS regression theory. Similarly, taking $A$ to be a cube centered at the origin (or   more generally a hyper-rectangle)  corresponds, in a traditional regression setting, to the test which rejects for large values of the maximum. Alternatively, one may take $A$ to be an ellipsoid centered at the origin, if one wishes to place more emphasis on a particular coordinate of the unconditional regression coefficient than the others.

However, it is critical to note that any measurable set $A$ satisfying the properties described above will induce an asymptotically valid test. To provide another, more general example, for a continuous function $g: \rr^p \to \rr$, and a choice of $t \in \rr$ such that, for $Z \sim N_p(0, \, I_p)$,

$$
\pp(g(Z)\leq t) = 1- \alpha \, \, ,
$$

\noindent we may take $A = g^{-1} ((-\infty, \, t])$. \qed

\label{rem.rej.ols}
\end{rem}

\begin{example} ($VARMA(p, \, q)$ process) \rm Let $d \in \nn$, and let $\{\epsilon_n: n \in \nn\} \subset \rr^{d+1}$ be a sequence of mean zero i.i.d. random variables such that, for some $\delta > 0$, 

$$
\lVert \epsilon_1 \rVert_{8 + 4\delta} < \infty \, \, .
$$

\noindent Let $B_0 = I_{d+1}$ be the identity matrix. Let $p, \,q \in \nn$, and let $\{A_i: i \in [q]\} \subset \rr^{(d + 1) \times (d+1)}$ and $\{B_j: j \in [p]\} \subset \rr^{(d+1)\times (d+1)}$ be matrices.

Let $\{Z_n = (Y_n, \, X_{n,\, 1}, \, \dots,\, X_{n,\, d}): n \in \nn\} \subset \rr^{d+1}$ satisfy the following vector autoregressive moving average $(VARMA)$ equation, i.e. for all $n \in \nn$ such that $n \geq p +1$, 

\beq
\sum_{j = 0}^p B_j Z_{n - j} = \sum_{i=0} ^qA_i \epsilon_{n- i} \,\, ,
\label{varma.eq}
\eeq

\noindent i.e. $\{Z_n\}$ is a $VARMA(p, \, q)$ process. Let $g: \mathbb{C} \to \mathbb{C}$ be the polynomial 

$$
g(z) = \det \left( \sum_{j = 0}^p B_j z^j \right) \, \, .
$$

\noindent Suppose that the roots of $g$ lie outside the complex unit circle $\mathbb{D} = \{z \in \mathbb{C}: |z| \leq 1\}$. Then, by \cite{lutkepohl}, there exists a unique stationary solution to (\ref{varma.eq}), i.e. there exists a stationary sequence $\{Z_n\}$ satisfying (\ref{varma.eq}). 

Suppose that, in this setting, the unconditional regression coefficient $\beta = 0$. As long as the $\epsilon_i$ have an absolutely continuous density with respect to the Lebesgue measure on $\rr^{d+1}$, it follows that, by Theorem 1 of \cite{mokkadem}, the sequence $\{Z_n\}$ is geometrically $\beta$-mixing, and so the conditions of Theorem \ref{ols.perm.big.thm} are satisfied. In particular, for any $\alpha \in (0,\,1)$, for any choice of null region $A$ as described in Remark \ref{rem.rej.ols}, the permutation test of the null hypothesis $H_0$ with null region $A$ will be asymptotically valid at the nominal level $\alpha$. \qed

\label{varma.ols}
\end{example}

\begin{rem} \rm Note that Theorem \ref{ols.perm.big.thm} also permits for the inclusion of lagged regressor variables; indeed, consider $\{(Y_i, \, X_i): i \in [n]\}$ in the setting of Theorem \ref{ols.perm.big.thm}. We may, for instance, introduce a new column to the data matrix $X$, of the form

$$
X_{i, \, p+1} = X_{i-1, \, 1} \, \, .
$$ 

\noindent The mixing and moment conditions of Theorem \ref{ols.perm.big.thm} still hold for the updated versions of the $X_i$, and so we may perform analogous inference with the inclusion of this new regressor. \qed

\label{rem.lags.ols}
\end{rem}

\begin{example} (Ljung-Box permutation test) \rm Let $\{T_n: n \in \nn\}$ be a real-valued, stationary, $\alpha$-mixing sequence satisfying the moment and mixing conditions of Theorem \ref{ols.perm.big.thm}. Let $p \in \nn$, and, for each $n \in \nn$ and $j \in [p]$, let 

$$
\begin{aligned}
Y_n &= \frac{T_{n+p}}{\hat{\sigma}_T}\\
X_{n, \, j} &= \frac{T_{n + p - j}}{\hat{\sigma}_T}  \, \, ,
\end{aligned}
$$

\noindent where $\hat{\sigma}_T$ is the sample variance of $\{T_i: i \in [n]\}$. The sequence $\{Z_n = \hat{\sigma}_T(Y_n,\, X_n): n \in \nn\}$ satisfies the mixing and moment conditions of Theorem \ref{ols.perm.big.thm}, and $\hat{\sigma}_T$ is invariant under permutations. Hence we may apply Remark \ref{rem.lags.ols}, to construct a permutation test analogous to the Ljung-Box test. To be precise, consider a hypothesis test of the hypothesis 

$$
H: \rho_1 = \dots = \rho_p = 0 \, \, ,
$$

\noindent where $\rho_j$ is the $j$th order autocorrelation of the sequence $\{T_n: n \in \nn\}$. Let 

$$
\hat{\rho}_n = (\hat{\rho}_{n,\, 1}, \, \dots,\, \hat{\rho}_{n,\, p}) \, \, ,
$$

\noindent where $\hat{\rho}_{n,\, j}$ is the $j$th sample autocorrelation of the sequence $\{T_i: i \in [n + p]\}$. Since the sample variance of the $T_i$ is invariant under permutations, by Theorem \ref{ols.perm.big.thm}, the permutation test based on the test statistic 

$$
\sqrt{n} \hat{\Gamma}_n^{-1/2}\hat{\rho}_n \, \, ,
$$

\noindent which rejects for large values of the statistic 

$$
n \hat{\rho}_n ^T \hat{\Gamma}_n^{-1} \hat{\rho}_n \, \, ,
$$

\noindent is an asymptotically valid  test of $H$ at the nominal level $\alpha$. While this is not exactly of the same form as the Ljung-Box test, this is similar, in the sense that the test statistic is a quadratic form of the sample autocorrelations. \qed

\label{ex.ljung.box.ols}
\end{example}

\begin{example} (Testing for cross-correlations) \rm Let $\{(\tilde{Y}_n,\, \tilde{X}_n): n \in \nn\}$ be as in Theorem \ref{ols.perm.big.thm}. Consider a test of the null hypothesis 

$$
H: \rho_{\tilde{Y},\, \tilde{X}_1} = \dots =\rho_{\tilde{Y},\, \tilde{X}_p} = 0\, \, ,
$$

\noindent where, for each $j \in [p]$, $\rho(\tilde{Y},\, \tilde{X}_j) = \text{corr}(\tilde{Y}_1,\, \tilde{X}_{1,\,j})$.

As in Example \ref{ex.ljung.box.ols}, since, for each $j\in [p]$, the coordinate-wise $j$th sample variance 

$$
\hat{\sigma}_{n,\, j} ^2 = \frac{1}{n} \sum_{i=1}^n (\tilde{X}_{i, \, j} - \bar{\tilde{X}}_{n,\, j}) ^2 
$$

\noindent is permutation invariant, we may construct a test  of $H$ based on permutations. Indeed, for each $i \in [n]$ and $j \in [p]$, let

$$
\begin{aligned}
Y_i &= \frac{\tilde{Y}_i}{\hat{\sigma}_{n,\, Y}} \\
X_{i,\, j}& = \frac{\tilde{X}_i}{\hat{\sigma}_{n,\, j}} \, \, .
\end{aligned}
$$

\noindent Also, let

$$
\hat{\rho}_n = (\hat{\rho}_{n,\, 1}, \, \dots,\, \hat{\rho}_{n,\,p} ) \, \, ,
$$

\noindent where, for each $j \in [p]$, $\hat{\rho}_{n,\, j}$ is the sample correlation between $\{Y_i: i \in [n]\}$ and $\{X_{i, \, j}: i \in [n]\}$. We have that the permutation test based on the test statistic 

$$
\sqrt{n} \hat{\Gamma}_n ^{-1/2} \hat{\rho}_n \, \, ,
$$

\noindent which rejects for large values of the test statistic 

$$
n \hat{\rho}_n^T \hat{\Gamma}_n ^{-1} \hat{\rho}_n \, \, ,
$$

\noindent will result in an asymptotically valid two-sided test of $H$ at the nominal level $\alpha$. Note also that, by Remark \ref{rem.lags.ols}, such a test may also be used to test whether lagged cross-correlations of the form 

$$
\rho_{\tilde{Y}, \tilde{X}_j,\, r} = \text{corr}(\tilde{Y}_1, \, \tilde{X}_{1 + r,\, j}) 
$$

\noindent are significantly different from zero. \qed

\end{example}

\section{Testing for monotone trend using least squares}\label{sec.ols.monotone}

In this section, we consider the nonparametric regression model 

$$
Y_{n,\, i} = X_{n,\, i} + \lambda_{n,\, i} \, \, ,
$$

\noindent where $\{X_{n,\, i}: n \in \nn\}$ is a row-wise stationary triangular array of $\alpha$-mixing real-valued random variables, and $\{\lambda_{n,\, i}: n \in \nn,\, i \in [n]\}$ is a triangular array of real-valued constants such that 

$$
\lambda_{n,\, i} = g\left( \frac{i}{n} \right) 
$$

\noindent for some continuous monotone function $g$. In particular, we focus on testing the null hypothesis that the triangular array $\{Y_{n,\, i}\}$ does not exhibit any monotone trend, i.e. 

$$
H_0 ^{(m)}: g(x) \equiv C \text{ for all } x \in [0, \, 1] \, \, .
$$

\noindent using a permutation testing procedure based on a possibly studentized version of the sample regression coefficient obtained when regressing the sequence $\{Y_i: i \in [n]\}$ on the index $\{i: i \in [n]\}$, i.e. 

$$
\hat{\beta}_n = \frac{1}{\sum_{i=1}^n \left(i - \frac{n+1}{2} \right)^2 } \sum_{i=1}^n \left( i - \frac{n+1}{2} \right) (Y_i - \bar{Y}_n) \, \, .
$$

Under the additional assumption that the triangular array $\{X_{n,\, i} \}$ is row-wise i.i.d., the randomization hypothesis holds, and so the permutation test based on any test statistic will be finite-sample exact at the nominal level $\alpha$. However, in the more general setting of $\alpha$-mixing sequences, the test may not be exact in finite samples, or indeed asymptotically valid as $n \to \infty$. This leads to issues of a lack of Type 1 and Type 3 error control, where the permutation test may reject even when the $\lambda_{n, \, i}$ are constant, on account of the triangular array of errors $X_{n,\,i}$ not being independent and identically distributed. 

Before we may commence closer examination of these issues, we provide some preliminary definitions and results, in addition to a brief discussion of the choice of problem setting.

\subsection{Preliminaries} 

In this section, we establish some preliminary notation, in addition to providing some simple preliminary results and a central limit theorem for the limiting distribution of the test statistic. 

We begin with some preliminary notation, to be used throughout the remainder of this section. In the problem setting 

$$
Y_{n,\, i} = X_{n,\, i} + \lambda_{n,\, i} \, \, ,
$$

\noindent for each $n \in \nn$, let  

$$
\begin{aligned}
\bar{X}_n &= \frac{1}{n} \sum_{i=1}^n X_{n,\,i} \\
\bar{\lambda}_n &= \frac{1}{n} \sum_{i=1} \lambda_{n,\,i} \\
\bar{Y}_n &= \frac{1}{n} \sum_{i=1}^n Y_{n,\, i}
\end{aligned}
$$

We now provide a simple lemma to motivate the choice of test statistic. 

\begin{lem} Let $g:[0, \, 1] \to \rr$ be a continuous and nondecreasing. Then 

$$
\int_0^1 x g(x) \d x  \geq \frac{1}{2} \int_0 ^1 g(x) \d x 
$$

\noindent with equality if and only if $g$ is constant.

\end{lem}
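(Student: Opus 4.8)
The plan is to reduce the claim to a single sign statement about an integrand and then read off both the inequality and its equality case from that. First I would observe that since $\int_0^1 (x - \tfrac12)\, dx = 0$, the stated inequality is equivalent to $\int_0^1 (x - \tfrac12) g(x)\, dx \geq 0$. Moreover, for any constant $c$ we have $\int_0^1 (x - \tfrac12) g(x)\, dx = \int_0^1 (x - \tfrac12)\,(g(x) - c)\, dx$, because subtracting $c$ alters the integral only by $c \int_0^1 (x - \tfrac12)\, dx = 0$. This freedom to insert an arbitrary constant is exactly what makes the argument work.

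The key step is to take $c = g(\tfrac12)$. Because $g$ is nondecreasing, $g(x) - g(\tfrac12) \leq 0$ on $[0, \tfrac12)$ and $g(x) - g(\tfrac12) \geq 0$ on $(\tfrac12, 1]$; in each case this matches the sign of the factor $x - \tfrac12$, so the integrand $(x - \tfrac12)\,(g(x) - g(\tfrac12))$ is nonnegative on all of $[0,1]$. Its integral is therefore nonnegative, which yields the desired inequality.

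For the equality direction I would use continuity. Writing $h(x) = (x - \tfrac12)\,(g(x) - g(\tfrac12))$, the function $h$ is continuous and nonnegative, so $\int_0^1 h = 0$ forces $h \equiv 0$. Since $x - \tfrac12 \neq 0$ for $x \neq \tfrac12$, this means $g(x) = g(\tfrac12)$ for every $x \neq \tfrac12$, and continuity of $g$ then extends the identity to $x = \tfrac12$ as well; hence $g$ is constant. The converse is immediate, since a constant $g$ plainly gives equality.

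I expect no serious obstacle here, as the argument is entirely elementary; the only point requiring a little care is the equality case, where one must invoke continuity of $g$ (hence of $h$) to pass from a vanishing integral of a nonnegative function to pointwise vanishing of the integrand. As alternative routes one could instead apply Chebyshev's integral inequality to the two nondecreasing functions $x \mapsto x$ and $g$ together with its known equality condition, or integrate by parts and exploit the convexity of $G(x) = \int_0^x g(t)\, dt$ via the chord bound $G(x) \leq x\, G(1)$; but the direct sign argument above is the shortest and renders the equality condition the most transparent.
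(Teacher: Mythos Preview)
Your proof is correct and takes a genuinely different route from the paper's own argument. The paper proceeds by integration by parts: setting $G(x)=\int_0^x g(u)\,\d u$, it writes $\int_0^1 x g(x)\,\d x = G(1) - \int_0^1 G(x)\,\d x$ and then invokes the pointwise bound $G(x)\le x g(x)$ (valid since $g$ is nondecreasing) to get $\int_0^1 G(x)\,\d x \le \int_0^1 x g(x)\,\d x$, from which the inequality follows by rearrangement; equality is then traced back to $G(x)=xg(x)$ for all $x$. By contrast, you center the kernel and subtract $g(\tfrac12)$ so that the integrand $(x-\tfrac12)(g(x)-g(\tfrac12))$ is manifestly nonnegative, which is the Chebyshev-type argument you allude to at the end. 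Your approach is shorter, avoids integration by parts entirely, and makes the equality case especially transparent (a continuous nonnegative function with zero integral must vanish identically). The paper's approach, on the other hand, connects the inequality to the antiderivative $G$ and its relationship to $xg(x)$, which is a natural viewpoint if one is already thinking in terms of cumulative sums or distribution functions. Amusingly, the integration-by-parts alternative you sketch in your final paragraph (using the chord bound $G(x)\le xG(1)$ from convexity of $G$) is a close cousin of the paper's actual proof, which instead uses the bound $G(x)\le xg(x)$.
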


\begin{proof} Let 

$$
G(x) = \int_0 ^x g(u) \d u \, \, .
$$

\noindent Note that, since $g$ is continuous and nondecreasing, it follows that $G(x) \leq x g(x)$ for all $x$, with strict inequality for some $x \in [0,\, 1]$ unless $g$ is constant. Integrating by parts, we have that 

$$
\begin{aligned}
\int_0 ^1 x g(x) \d x &= \int_0 ^1 g(x) \d x - \int_0^1 G(x) \d x  \\
&\geq \int_0^1 g(x) \d x - \int_0^1 x g(x) \d x \\ 
\implies \int_0 ^1 x g(x) \d x &\geq \frac{1}{2}\int_0^1 g(x) \d x \, \, ,
\end{aligned}
$$

\noindent with equality if and only if $x g(x) = G(x)$ for all $x$, i.e. $g$ is constant. \end{proof}

\begin{rem} \rm By an identical argument, we have that, for $g$ a continuous and nonincreasing function on $[0,\, 1]$, 

$$
\int_0 ^1 x g(x) \d x \leq \frac{1}{2} \int_0^1 g(x) \d x \, \, ,
$$

\noindent with equality if and only if $g$ is constant. \qed 

\end{rem}

\begin{rem} \rm We observe that 

$$
\begin{aligned}
\ee[\hat{\beta}_n] &= \frac{12}{n(n^2 -1)} \sum_{i=1}^n \left( i - \frac{n+1}{2}\right) (\lambda_{n,\, i} - \bar{\lambda}_n) \\
&= \frac{12}{n^2-1} \left( \sum_{i=1}^n \frac{i}{n} \cdot \lambda_{i,\,n} - \frac{n +1}{2} \bar{\lambda}_n \right) \\
&= \frac{12n}{n^2 - 1} \left( \int_0 ^1 x g(x) \d x - \frac{1}{2} \int_0^1 g(x) \d x + o(1) \right) \, \, , 
\end{aligned}
$$

\noindent where the last line follows since, for a bounded continuous function $f$, the Riemann sums of the function $f$ over an interval $[a,\, b]$ converge to the Lebesgue integral over $[a,\, b]$. In particular, it follows that, to first order, the mean of the regression coefficient $\hat{\beta}_n$ will be greater than or less than zero if and only if $g$ is strictly monotone increasing or decreasing, respectively. \qed

\end{rem}

We now provide a result furnishing the asymptotic distribution of the test statistic. 

\begin{thm} Let $\{X_{n,\, i}: n \in \nn, \, i \in [n]\}$ be a row-wise strictly stationary, mean zero, $\alpha$-mixing triangular array of real-valued random variables such that, for all $n\in \nn$, for some $\delta >0$ and some constant $K >0$,

$$
\ee\left[\left| X_{n,\, i} \right| ^{2+\delta} \right] \leq K  
$$

\noindent and, for some $C > 0$, 

$$
\sum_{j=1}^{n-1} \alpha_{X_{n,\, \cdot}} (j) ^\frac{\delta}{2+ \delta} \leq C \, \, .
$$

\noindent Suppose also that there exists $\tau^2 > 0$ such that, as $n \to \infty$,

\beq
\var(X_{n,\,1} ) + 2 \sum_{i=1}^{n-1} \cov(X_{n,\,1}, \, X_{n,\,1+ i} ) \to \tau^2 \, \, .
\label{lim.variance.ols.monotone}
\eeq

\noindent For each $n \in \nn$ and $i \in [n]$, let  

$$
Y_{n, \, i} = X_{n, \, i} + \lambda_{n,\,i} \, \, ,
$$

\noindent where $\{\lambda_{n,\, i}: n \in \nn, \, i \in [n]\} \subset \rr$ is a triangular array of constants. Then, as $n \to \infty$, 

$$
\frac{1}{n^{3/2}} \sum_{i=1}^n \left(i - \frac{n+1}{2} \right) (Y_{n,\, i} - \bar{Y}_n) - \frac{1}{n^{3/2} } \left( \sum_{i=1}^n i \lambda_{n,\, i} - \frac{n(n+1)}{2} \bar{\lambda}_n \right)  \overset{d}\to N(0, \, \tilde{\tau}^2) \, \, ,
$$

\noindent where 

$$
\tilde{\tau}^2 = \frac{1}{12} \tau^2 \, \, .
$$

\label{thm.ols.monotone.clt}
\end{thm}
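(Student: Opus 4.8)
The plan is to reduce the statement to a central limit theorem for a single weighted sum of the array $\{X_{n,i}\}$, and then to apply the triangular-array CLT of \cite{neumann} that already underlies Theorem \ref{ols.test.stat.clt}. Writing $c_{n,i} = i - \frac{n+1}{2}$ and expanding $Y_{n,i} - \bar Y_n = (X_{n,i} - \bar X_n) + (\lambda_{n,i} - \bar\lambda_n)$, I would first note that $\sum_{i=1}^n c_{n,i} = 0$, so that both centering terms $\bar X_n$ and $\bar\lambda_n$ drop out of $\sum_i c_{n,i}(Y_{n,i}-\bar Y_n)$. Consequently the deterministic part equals $\sum_{i=1}^n c_{n,i}\lambda_{n,i} = \sum_{i=1}^n i\lambda_{n,i} - \frac{n(n+1)}{2}\bar\lambda_n$, which is precisely the quantity subtracted in the statement. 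Hence the left-hand side collapses to $S_n \defeq n^{-3/2} \sum_{i=1}^n c_{n,i} X_{n,i}$, and it remains to prove $S_n \ind N(0,\tau^2/12)$.

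Next I would set up $S_n = \sum_{i=1}^n \tilde W_{n,i}$ as a sum over a mean-zero triangular array, where $\tilde W_{n,i} = n^{-3/2} c_{n,i} X_{n,i}$. Since each $\tilde W_{n,i}$ is a deterministic rescaling of $X_{n,i}$, the array $\{\tilde W_{n,i}\}$ generates the same $\sigma$-algebras and therefore inherits the mixing coefficients $\alpha_{X_{n,\cdot}}$ of the original array, so the mixing-summability hypothesis carries over unchanged. Because $|c_{n,i}| \le n/2$, we have $\lVert \tilde W_{n,i}\rVert_{2+\delta} \le \tfrac{1}{2}n^{-1/2} K^{1/(2+\delta)}$, whence $\sum_{i=1}^n \ee|\tilde W_{n,i}|^{2+\delta} = O(n^{-\delta/2}) \to 0$; this Lyapunov bound yields the Lindeberg condition and shows each summand is asymptotically negligible. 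With these ingredients the hypotheses required by \cite{neumann} (mean zero, uniformly bounded $(2+\delta)$ moments, summable mixing, convergent variance) are met for the weighted array, and, crucially, that theorem does not require the rows to be identically distributed, so the heterogeneous weights cause no difficulty, provided the limiting variance is identified.

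The main obstacle is the variance computation, i.e. showing $\var(S_n) \to \tau^2/12$. Writing $\phi_{n,i} = c_{n,i}/n$ and $r_n(k) = \cov(X_{n,1}, X_{n,1+k})$, stationarity gives $\var(S_n) = \frac{1}{n}\sum_{k}r_n(k)\sum_{i}\phi_{n,i}\phi_{n,i-k}$. Davydov's covariance inequality with the moment and mixing hypotheses gives $|r_n(k)| \le 8K^{2/(2+\delta)}\alpha_{X_{n,\cdot}}(k)^{\delta/(2+\delta)}$, so the covariances are uniformly summable; combined with the Riemann-sum limit $\frac{1}{n}\sum_i \phi_{n,i}^2 \to \int_0^1 (u-\tfrac12)^2\,\d u = \tfrac{1}{12}$ and the hypothesis (\ref{lim.variance.ols.monotone}), namely $r_n(0)+2\sum_{k\ge1}r_n(k)\to\tau^2$, the leading term converges to $\tfrac{1}{12}\tau^2$. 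The delicate part is controlling the lag-dependent corrections $\sum_i \phi_{n,i}(\phi_{n,i-k}-\phi_{n,i})$ and the truncated summation ranges: since $\phi_{n,\cdot}$ has slope $1/n$, each such correction is $O(|k|/n)$, so one must show $\frac{1}{n}\sum_k |k|\,|r_n(k)| \to 0$, which I would obtain by splitting the lag sum at a slowly growing cutoff and invoking the uniform summability of $r_n$. A cleaner route, which I would use as a cross-check and possibly in place of the direct computation, is summation by parts: with $T_i = \sum_{l\le i} X_{n,l}$ one finds $S_n = \tfrac12\, n^{-1/2}T_n - \frac{1}{n}\sum_{i=1}^{n-1} n^{-1/2} T_i + o_P(1)$, so that if $n^{-1/2}T_{\lfloor n t\rfloor}$ satisfies an invariance principle with limit $\tau W(t)$, the continuous mapping theorem yields $S_n \ind \tau\big(\tfrac12 W(1) - \int_0^1 W(t)\,\d t\big)$, a mean-zero Gaussian with variance $\tau^2\big(\tfrac14 - \tfrac12 + \tfrac13\big) = \tau^2/12$; this transparently produces the constant $\tilde\tau^2 = \tau^2/12$. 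Either way, feeding $\var(S_n)\to\tau^2/12$ into the CLT of \cite{neumann} completes the proof.
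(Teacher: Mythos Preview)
Your main line—reducing the statement to $S_n = n^{-3/2}\sum_{i=1}^n c_{n,i}X_{n,i}$ via $\sum_i c_{n,i}=0$, then applying the triangular-array CLT of \cite{neumann} to the weighted row $\tilde W_{n,i}=n^{-3/2}c_{n,i}X_{n,i}$ with the Lyapunov bound $\sum_i\ee|\tilde W_{n,i}|^{2+\delta}=O(n^{-\delta/2})$ and the variance identification $\var(S_n)\to\tau^2/12$—is correct and is the paper's approach: the proof is relegated to the supplement, but the paper explicitly invokes \cite{neumann} for the companion Theorem \ref{ols.test.stat.clt}, and Example \ref{local.ols.monotone} appeals to ``an analogous argument to the one presented in the proof of Theorem \ref{thm.ols.monotone.clt}'' for another weighted sum $\sum_i\lambda_{n,i}X_i$, confirming that the underlying engine is indeed a weighted-sum CLT for the mixing array.

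The Abel-summation route you sketch as a cross-check, writing $S_n=\tfrac12 n^{-1/2}T_n-\tfrac{1}{n}\sum_{i<n}n^{-1/2}T_i+o_P(1)$ and reading off $\tau\bigl(\tfrac12 W(1)-\int_0^1 W\bigr)$ from an invariance principle, is a genuinely different argument that the paper does not use. It is elegant because it produces $\tilde\tau^2=\tau^2(\tfrac14-\tfrac12+\tfrac13)=\tau^2/12$ without any lag-splitting, but it trades the direct variance bookkeeping for a functional CLT on the partial sums, which is a strictly stronger input than the hypotheses of the theorem supply; your primary Neumann-based argument stays closer to the stated assumptions and to the paper.
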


\begin{rem} \rm Since the $\lambda_{n,\, i}$ are a sequence of constants, it follows that $\var(X_{n,\, 1}) = \var(Y_{n,\, 1})$, and, for all $j\geq 2$, $\cov(X_{n,\, 1}, \, X_{n, \, j}) = \cov(Y_{n,\, 1}, \, Y_{n,\, j})$. \qed

\end{rem}

\subsection{Discussion of the problem setting}

In this section, we make some smoothness assumptions on the additive constants $\lambda_{n,\, i}$; in particular, we assume that the $\lambda_{n,\, i}$ may be written as 

$$
\lambda_{n,\, i} = g\left( \frac{i}{n} \right) 
$$

\noindent for some continuous monotone function $g$, which is accompanied by the implicit assumption that $g:[0,\, 1]\to \rr$ is bounded on $[0,\, 1]$. These assumptions are similar to those made on in the setting of \cite{wavk}. The reader may wonder why we have imposed such a restriction on the triangular array of sequences $\{Y_{n,\,i}: n \in \nn,\, i \in [n]\}$, when a more natural choice of problem setting would be to consider the sequence 

\beq
Y_n = X_n + \lambda_n \, \, , 
\label{degenerate.setting}
\eeq

\noindent for $\{X_n: n \in \nn\}$ a sequence of real-valued, mean zero, $\alpha$-mixing random variables, and $\{\lambda_n: n \in \nn\} \subset \rr$ a monotone sequence of constants. 

The reasoning behind this is as follows. In essence, without restriction of the space of additive constants to the form provided, we may not perform a permutation test which is asymptotically consistent. In order to provide a pathological example of the issues that may occur without this added restriction, we provide the following lemma. 

\begin{lem} Let $n \in \nn$, and let $\{a_i: i \in [n]\}$ and $\{b_i: i \in [n]\}$ be nonnegative and nondecreasing sequences of real numbers. Then, for all $\pi_n \in S_n$, 

$$
\sum_{i=1}^n a_i b_{\pi_n(i)} \leq \sum_{i=1}^n a_i b_i \, \, .
$$

\label{lem.deranged}
\end{lem}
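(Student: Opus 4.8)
The plan is to recognize this as (one direction of) the classical rearrangement inequality: among all permutations $\pi_n \in S_n$, the sum $\sum_{i=1}^n a_i b_{\pi_n(i)}$ is maximized by the identity permutation, precisely because both sequences are sorted in the same (nondecreasing) order. The proof I would give rests on a single pairwise interchange inequality together with a bubble-sort-style reduction to the identity.

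First I would establish the key swap inequality. Fix a permutation $\pi$ and suppose it has an adjacent inversion, i.e. an index $i \in [n-1]$ with $\pi(i) > \pi(i+1)$. Let $\pi'$ be the permutation obtained from $\pi$ by swapping the values at positions $i$ and $i+1$, so that $\pi'(i) = \pi(i+1)$, $\pi'(i+1) = \pi(i)$, and $\pi'(k) = \pi(k)$ for all other $k$. Only the two affected positions contribute to the difference, and a direct computation gives
$$
\sum_{k=1}^n a_k b_{\pi'(k)} - \sum_{k=1}^n a_k b_{\pi(k)} = \left(a_{i+1} - a_i\right)\left(b_{\pi(i)} - b_{\pi(i+1)}\right) \, \, .
$$
Since $\{a_i\}$ is nondecreasing we have $a_{i+1} - a_i \geq 0$, and since $\{b_i\}$ is nondecreasing and $\pi(i) > \pi(i+1)$ we have $b_{\pi(i)} - b_{\pi(i+1)} \geq 0$; hence the right-hand side is nonnegative, and the swap does not decrease the sum.

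Next I would run the reduction. If $\pi$ is not the identity then it has at least one adjacent inversion, and performing the swap above strictly decreases the number of inversions of $\pi$ by exactly one, while by the display above not decreasing the value of $\sum_k a_k b_{\pi(k)}$. Since the number of inversions is a nonnegative integer, iterating this procedure terminates after finitely many steps at the identity permutation, whose associated sum is $\sum_{k=1}^n a_k b_k$. Because each step left the sum nondecreasing, I conclude $\sum_{i=1}^n a_i b_{\pi_n(i)} \leq \sum_{i=1}^n a_i b_i$ for every $\pi_n \in S_n$, as claimed.

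There is no serious obstacle here; the only points requiring care are the sign bookkeeping in the swap identity (which relies on both sequences being sorted the same way, so that position order and value order agree) and the observation that an adjacent transposition removing an adjacent inversion changes the inversion count by exactly one, guaranteeing termination. I note that nonnegativity of the sequences is not actually needed for this argument. Alternatively, one could bypass the combinatorial reduction via summation by parts, writing $a_k = a_1 + \sum_{\ell = 2}^k (a_\ell - a_{\ell - 1})$ and using that $\sum_{i \geq k} b_{\pi(i)} \leq \sum_{i \geq k} b_i$, since $b_k, \dots, b_n$ are the $n-k+1$ largest values, with the nonnegative increments $a_\ell - a_{\ell-1}$ serving as weights.
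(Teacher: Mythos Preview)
Your proof is correct and is the standard adjacent-swap (bubble-sort) proof of the rearrangement inequality; you are also right that nonnegativity plays no role. The paper, however, argues differently: it proceeds by strong induction on $n$, conditioning on the value $j = \pi_n(n)$ and splitting into the cases $j = n$, $1 < j < n$, and $j = 1$, each time invoking the inductive hypothesis on a suitable subsequence of length $n-1$ (or shorter) and, in the $j=1$ case, telescoping the resulting differences. Your approach is shorter and more transparent, since a single pairwise-swap inequality does all the work and the inversion count gives immediate termination; the paper's induction avoids tracking inversions but pays for it with a three-way case split and a somewhat delicate reindexing in the middle case. Either way the result is elementary, and your summation-by-parts alternative would also go through.
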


\begin{proof} We prove the claim by induction on $n$. The lemma holds trivially for $n = 1$. Let $n = 2$. We have that 

$$
\begin{aligned}
(a_2 - a_1)(b_2 - b_1) &\geq 0 \\
a_1 b_1 + a_2 b_2 &\geq a_1 b_2 + a_2 b_1 \, \, ,
\end{aligned}
$$

\noindent and so the claim holds for $n = 2$. We now prove the inductive hypothesis. Let $n \geq 3$, and suppose the claim holds for all $k \in \{2, \, \dots,\, n-1\}$. Let $\pi_n \in S_n$. Suppose that $\pi_n(n) = j$. If $j = n$, the result follows immediately from the inductive hypothesis.

If $1 < j < n$, we may apply the inductive hypothesis to the sequences $\{a_i: i \in [n-1]\}$ and $\{b_1, \, \dots,\, b_{j-1}, \, b_{j+1}, \, \dots,\, b_{n} \}$, and so 

$$
\begin{aligned}
\sum_{i=1}^{n} a_i b_{\pi_n(i)} &= a_n b_j + \sum_{i=1}^{n-1} a_i b_{\pi_n(i)} \\
&\leq a_n b_j + \sum_{i=1}^{j-1} a_i b_i + \sum_{i=j+1}^{n-1} a_i b_{i +1} \, \, .
\end{aligned}
$$

\noindent A further application of the inductive hypothesis to the sequences $\{a_{j+1}, \, \dots,\, a_n\}$ and $\{b_{j+1}, \, \dots,\, b_n\}$ concludes the proof in this case.

If $j=1$, by the inductive hypothesis applied to the sequences $\{a_1, \, \dots,\, a_{n-1}\}$ and $\{b_2, \, \dots,\, b_n\}$, it follows that 

$$
\begin{aligned}
\sum_{i=1}^n a_i b_{\pi_n(i)} - \sum_{i=1}^n a_i b_i &\leq a_n b_1 + \sum_{i=1}^{n-1} a_i b_{i+1}  - \sum_{i=1}^n a_i b_i\\
&= \sum_{i=1}^{n-1} a_i (b_{i+1} - b_i) + a_n(b_1 - b_n) \\
&\leq \sum_{i=1}^{n-1} a_n(b_{i+1} - b_i) + a_n(b_1 - b_n) \\
&= 0 \, \, .
\end{aligned}
$$

\noindent It therefore follows that the result holds for $k = n$, thus concluding the proof. \end{proof}

With this lemma in hand, we may now provide an example of the degenerate behavior that may occur in a more general setting. 

\begin{example} (Deterministic, fast-growing sequence) \rm Consider the setting (\ref{degenerate.setting}) in which $X_n \equiv 0$ for all $n \in \nn$, and the constants $\lambda_n$ satisfy, as $n \to \infty$, 

\beq
\frac{n \sum_{i=1}^{n-1} \lambda_i} {\lambda_n} = o(1)  \, \, .
\label{lambda.gets.big}
\eeq

\noindent For instance, we may consider the sequence defined by $\lambda_1 = 1$, and, for $n  \ge 2$, $\lambda_n = \exp(\exp(\lambda_{n-1} ))$. For such a sequence, the sample regression coefficient is equal to 

$$
\hat{\beta}_n = \frac{6}{n^2 } \lambda_n (1 + o(1)) \, \, .
$$

\noindent Let $\Pi_n \sim \text{Unif}(S_n)$. We have that 

$$
\hat{\beta}_{\Pi_n} = \frac{12}{n^3} \left( \Pi_n ^{-1} (i) - \frac{n+1}{2} \right) \lambda_n (1 + o(1)) \, \, .
$$

\noindent In particular, it follows that, for $n$ sufficiently large, the permutation distribution $\hat{R}_n$, based on the test statistic $\hat{\beta}_n$, is approximately equal to the uniform distribution on the set $\{12 (i - (n+1)/2)/n^3: i \in [n]\}$, and so has variance $\sigma^2$ approximately equal to

$$
\sigma^2 \approx \frac{12}{n^6} (n^2- 1) = \frac{12}{n^4} (1 + o(1) ) \,\, .
$$

\noindent In particular, since the permutation test, broadly speaking, rejects for large values of $\hat{\beta}_n/\sigma$, we observe that, in this noiseless setting, where the sequence $\{\lambda_n\}$ is clearly increasing, the permutation test will not reject with probability close to 1 for large values of $n$, i.e. we do not have asymptotic validity of the permutation test.

Furthermore, this is not an issue which may be easily solved by studentization. Since the permutation test must be asymptotically valid under the assumption that $\lambda_n \equiv 0$ for all $n$, examination of the limiting variance (\ref{lim.variance.ols.monotone}) in Theorem \ref{thm.ols.monotone.clt} reveals that any potential studentization factor $\hat{\tau}_n$ must provide some estimate of the covariances $\cov(X_1, \, X_{1 + i})$, and so should be of the form

$$
\hat{\tau}_n ^2 = \frac{1}{n} \sum_{i=1}^n (Y_{n,\,i} - \bar{Y}_n )^2 + \frac{2}{n} \sum_{i=1}^{b_n} \sum_{j=1}^{n-i} (Y_{n,\, j} - \bar{Y}_n)(Y_{n,\, j + i} - \bar{Y}_n) \, \, .
$$

However, as illustrated by Lemma \ref{lem.deranged}, this will have a very limited dampening effect on the ratio of the value of the test statistic and the standard deviation of the permutation distribution $\hat{\beta}_n/\sigma$, and will, in fact, exacerbate the aformentioned issue, since, by Lemma \ref{lem.deranged}, the studentization factor is maximized for $\pi_n = e_n$, the identity permutation. \qed

\end{example}

In essence, the assumptions on the form of the $\lambda_{n,\, i}$ removes the issues alluded to in the above example, in addition to being similar to the requirements for convergence of a monotone nonparametric regression estimator due to \cite{mukherjee}, in the sense that the restrictions prohibit the existence of extremely high-leverage points in the regression setting under consideration.

Having motivated the problem setting, we may now begin construction of a permutation testing framework which is asymptotically valid under the null hypothesis for weakly dependent sequences, and which is finite-sample exact for i.i.d. triangular arrays $\{Y_{n,\, i}\}$.

\subsection{The limiting permutation distribution}

We begin by considering the limiting permutation distribution $\hat{R}_n$ based on an unstudentized version of the test statistic $\hat{\beta}_n$. To this end, we provide the following result.

\begin{thm} Let $\{X_{n,\, i}: n \in \nn\}$ be a strictly stationary, mean zero $\alpha$-mixing triangular array of real-valued random variables such that, for some $\delta >0$, there exist constants $K,\, C \in \rr^+$ such that 

$$
\ee\left[\left| X_{n,\, 1} \right| ^{2+\delta} \right] \leq K 
$$

\noindent and 

$$
\sum_{j =1 }^{n-1 } \alpha_{X_{n,\, \cdot}} (n) ^\frac{\delta}{2+ \delta} \leq C \, \, .
$$

\noindent Suppose also that there exists $\sigma^2 > 0$ such that, for all $n \in \nn$ and $i \in [n]$,

$$
\var(X_{n,\, i}) = \sigma^2 \, \, .
$$

\noindent For each $n \in \nn$ and $i \in [n]$, let  

$$
Y_{n, \, i} = X_{n,\, i} + \lambda_{n,\, i} \, \, ,
$$

\noindent where $\{\lambda_{n,\, i}: n \in \nn, \, i \in [n]\} \subset \rr$ is a triangular array of constants such that, as $n\to\infty$, 

$$
\frac{1}{n} \sum_{i=1}^n (\lambda_{n, \, i} - \bar{\lambda}_n )^2 \to 0 \, \, .
$$

\noindent Let 

$$
T_n = T_n(Y_{n, \,1}, \, \dots,\,Y_{n, \, n}) := \frac{1}{n^\frac{3}{2}} \sum_{i=1}^n \left(i - \frac{n+1}{2} \right)  (Y_{n,\, i} - \bar{Y}_n) \, \, .
$$

\noindent Let $\hat{R}_n$ be the permutation distribution, based on the test statistic $T_n$, with associated group of transformations $S_n$. Then, as $n\to\infty$, 

$$
\sup_{t \in \rr} \left| \hat{R}_n (t) - \Phi\left( \frac{\sqrt{12}}{\sigma} t \right) \right| \overset{p} \to 0 \, \, ,
$$

\noindent where $\Phi$ is the standard Gaussian distribution. 

\label{ols.monotone.unstud.perm}
\end{thm}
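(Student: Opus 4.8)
The plan is to recognize the permutation distribution as that of a linear permutation statistic and apply Hoeffding's combinatorial central limit theorem conditionally on the data, using a subsequence argument to convert the resulting conditional convergence into convergence in probability. Since $\bar{Y}_n$ is invariant under permutation, writing $c_i = i - \frac{n+1}{2}$ and $D_j = Y_{n,j} - \bar{Y}_n$, the statistic evaluated at a permutation $\pi \in S_n$ becomes
$$T_n(Y_{n,\pi(1)}, \dots, Y_{n,\pi(n)}) = \frac{1}{n^{3/2}} \sum_{i=1}^n c_i D_{\pi(i)} \, \, ,$$
a linear permutation statistic with $\sum_i c_i = \sum_j D_j = 0$. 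For $\Pi_n \sim \text{Unif}(S_n)$, the standard second-moment computation for such statistics gives conditional mean zero and conditional variance
$$\var\left(T_n^{\Pi_n} \mid \{Y_{n,i}\}\right) = \frac{1}{n^3(n-1)} \left(\sum_{i=1}^n c_i^2\right)\left(\sum_{j=1}^n D_j^2\right) \, \, .$$

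Next I would show this conditional variance converges in probability to $\sigma^2/12$. The deterministic factor satisfies $n^{-3}\sum_i c_i^2 = \frac{n^2-1}{12 n^2} \to \frac{1}{12}$. For the random factor, decomposing $D_j = (X_{n,j} - \bar{X}_n) + (\lambda_{n,j} - \bar{\lambda}_n)$ and bounding the cross term by Cauchy--Schwarz, the hypothesis $\frac1n\sum_j(\lambda_{n,j}-\bar{\lambda}_n)^2 \to 0$ reduces $\frac1n \sum_j D_j^2$ to $\frac1n\sum_j (X_{n,j}-\bar{X}_n)^2$, which converges in probability to $\sigma^2$ by the weak law of large numbers for the stationary, $\alpha$-mixing array under the stated moment and mixing bounds (as in the proof of Theorem \ref{thm.ols.monotone.clt}). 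Together with $\frac{n}{n-1}\to 1$ this yields the limit $\sigma^2/12$, matching the advertised $N(0,\sigma^2/12)$ limit.

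The core step is to apply Hoeffding's combinatorial CLT conditionally on the data, which requires the Noether-type negligibility conditions on both coefficient sequences. For $c_i$ the condition is deterministic: $\max_i c_i^2 / \sum_i c_i^2 = O(1/n) \to 0$. For $D_j$ I would show $\max_j D_j^2 = o_P(n)$, which yields $\max_j D_j^2/\sum_j D_j^2 \overset{p}\to 0$: since $\E|X_{n,1}|^{2+\delta} \le K$ uniformly, a union bound gives $\pp(\max_j |X_{n,j}| > \epsilon\sqrt n) \le n K/(\epsilon\sqrt n)^{2+\delta} = O(n^{-\delta/2}) \to 0$, and the $\lambda$ and centering contributions are controlled identically. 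To transfer the conditional CLT to a statement about $\hat{R}_n$, I would use the subsequence principle: given any subsequence, pass to a further subsequence along which $\frac1n\sum_j D_j^2 \to \sigma^2$ and the Noether condition for $D_j$ both hold almost surely; on this full-probability event Hoeffding's theorem yields $\hat{R}_n(t) \to \Phi(\sqrt{12}\,t/\sigma)$ for each fixed $t$, and since the limit is a fixed constant it follows that $\hat{R}_n(t) \overset{p}\to \Phi(\sqrt{12}\,t/\sigma)$ pointwise in $t$. Note that, unlike the multivariate setting of Theorem \ref{ols.perm.unstud.thm}, no Cram\'er--Wold device is needed here since the statistic is scalar.

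Finally, I would upgrade pointwise convergence in probability to the uniform statement using the random-c.d.f.\ version of P\'olya's theorem (as employed in \cite{tirlea} via \cite{chandra}): since the limiting c.d.f.\ $t \mapsto \Phi(\sqrt{12}\,t/\sigma)$ is continuous and the $\hat{R}_n$ are monotone, pointwise convergence on a countable dense grid forces $\sup_t |\hat{R}_n(t) - \Phi(\sqrt{12}\,t/\sigma)| \overset{p}\to 0$. The main obstacle I anticipate is the clean verification of the Noether condition for the data-dependent coefficients $D_j$ strongly enough to invoke Hoeffding's theorem almost surely along the chosen subsequence: controlling $\max_j D_j^2 = o_P(n)$ with only $2+\delta$ moments while simultaneously ensuring $\frac1n\sum_j D_j^2 \to \sigma^2$ is the delicate quantitative heart of the argument, whereas the subsequence bookkeeping and the P\'olya step are routine.
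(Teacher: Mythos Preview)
Your proposal is correct and follows essentially the same route as the paper: reduce to a linear permutation statistic, apply Hoeffding's combinatorial CLT conditionally on the data via the subsequence device described in Remark~\ref{ols.perm.unstud.thm}'s companion remark, and finish with P\'olya's theorem. The only minor variation is that where the paper (as it does for Theorem~\ref{ols.perm.unstud.thm}) first treats the bounded case and then removes boundedness by truncation, you instead verify the Noether condition on the data scores directly via the $(2+\delta)$-moment union bound $\pp(\max_j|X_{n,j}|>\epsilon\sqrt{n})=O(n^{-\delta/2})$; both devices accomplish the same thing and neither is materially simpler here.
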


\begin{rem} \rm Note that, in this particular problem setting, we have that, as $n\to \infty$,
$$
\begin{aligned}
\frac{1}{n} \sum_{i=1}^n (\lambda_{n,\, i} -\bar{\lambda}_n )^2 &= \sum_{i=1}^n \frac{1}{n} \lambda_{n,\, i} ^2 - \bar{\lambda}_n^2 \\
&\to \int_0^1 g(x)^2 \d x -\left( \int_0^1 g(x) \d x \right)^2 \, \, .
\end{aligned}
$$

\noindent We have that 

$$
 \int_0^1 g(x)^2 \d x -\left( \int_0^1 g(x) \d x \right)^2  = 0 \iff g(x) = C \text{ for all } x \in [0, \, 1] \, \, ,
$$

\noindent i.e. the permutation distribution only converges under the null hypothesis of no trend.

\end{rem}

We observe that, in general, the variance of the limiting permutation distribution and the variance of the limiting distribution of the test statistic are not equal. We therefore proceed to find an appropriate studentizing factor in order to resolve this issue of mismatched variances, using a similar argument to the one presented in Section 3 of \cite{tirlea}.

\begin{lem} Let $\{X_{n,\, i}: n \in \nn\}$ be a strictly stationary, mean zero $\alpha$-mixing triangular array of real-valued random variables such that, for some $\delta >0$, there exist constants $K,\, C \in \rr^+$ such that 

$$
\ee\left[\left| X_{n,\, 1} \right| ^{4+2 \delta} \right] \leq K 
$$

\noindent and 

$$
\sum_{j =1 }^{n-1 } \alpha_{X_{n,\, \cdot}} (n) ^\frac{\delta}{2+ \delta} \leq C \, \, .
$$

\noindent Suppose also that there exists $\sigma^2 > 0$ such that, for all $n \in \nn$ and $i \in [n]$,

$$
\var(X_{n,\, i}) = \sigma^2 \, \, ,
$$

\noindent and that there exists $\tau^2 > 0$ such that, as $n \to \infty$,

$$
\var(X_{n,\,1} ) + 2 \sum_{i=1}^{n-1} \cov(X_{n,\,1}, \, X_{n,\,1+ i} ) \to \tau^2 \, \, .
$$

\noindent Let $\{b_n: n \in \nn\} \subset \rr$ be a sequence such that $b_n = o(\sqrt{n})$ and, as $n \to \infty$, $b_n \to \infty$. Let 

$$
\hat{\tau}_n^2 = \frac{1}{n}\sum_{i=1}^n (X_{n,\, i} - \bar{X}_n) ^2 + \frac{2}{n} \sum_{i=1}^{b_n} \sum_{j = 1}^{n- i} (X_{n,\,j}- \bar{X}_n) (X_{n,\, j + i} - \bar{X}_n) \, \, .
$$

\noindent Let $\Pi_n \sim \text{Unif} (S_n)$, independent of the row $\{X_{n,\, i}: i \in [n]\}$. Then the following results hold.

\begin{enumerate}

\item[(i)] As $n \to \infty$,

$$
\hat{\tau}_n^2 \overset{p}\to \tau^2 \, \, .
$$

\item[(ii)] As $n \to \infty$,

$$
\hat{\tau}_{\Pi_n}^2 \overset{p}\to \sigma^2 \, \, .
$$

\end{enumerate}

\label{lem.ols.monotone.stud}
\end{lem}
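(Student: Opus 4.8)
The plan is to prove both convergences by the mean–variance (second moment) method. Write $\hat\gamma_n(k) = \frac{1}{n}\sum_{j=1}^{n-k}(X_{n,\,j}-\bar X_n)(X_{n,\,j+k}-\bar X_n)$ for the (biased) lag-$k$ sample autocovariance, so that $\hat\tau_n^2 = \hat\gamma_n(0) + 2\sum_{k=1}^{b_n}\hat\gamma_n(k)$, and the permuted statistic $\hat\tau_{\Pi_n}^2$ has the same form with $X_{n,\,j}$ replaced by $X_{n,\,\Pi_n(j)}$. Two ingredients recur. First, Davydov's covariance inequality, which under $\E|X_{n,\,1}|^{2+\delta}\le K$ gives $|\cov(X_{n,\,1},\,X_{n,\,1+k})| \le 8\,K^{2/(2+\delta)}\,\alpha_{X_{n,\,\cdot}}(k)^{\delta/(2+\delta)}$, so the hypothesis $\sum_k \alpha_{X_{n,\,\cdot}}(k)^{\delta/(2+\delta)}\le C$ bounds the autocovariances summably and uniformly in $n$. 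Second, a fourth-order version of the same bound controlling $\cov(X_{n,\,a}X_{n,\,b},\,X_{n,\,c}X_{n,\,d})$, which is precisely where the stronger moment assumption $\E|X_{n,\,1}|^{4+2\delta}\le K$ enters.

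For part (i), I would first note that $\var(\bar X_n) = \frac{1}{n}\bigl(\sigma^2 + 2\sum_{k=1}^{n-1}(1-k/n)\cov(X_{n,\,1},X_{n,\,1+k})\bigr) = O(1/n)$ by the first ingredient, so $\bar X_n \overset{p}\to 0$, and an analogous computation gives $\hat\gamma_n(0)\overset{p}\to\sigma^2$. For the lag sum, $\E\bigl[2\sum_{k=1}^{b_n}\hat\gamma_n(k)\bigr]$ differs from $2\sum_{k=1}^{b_n}\cov(X_{n,\,1},X_{n,\,1+k})$ only by edge effects and $\bar X_n$-corrections that vanish; combining the hypothesis $\sigma^2 + 2\sum_{k=1}^{n-1}\cov(X_{n,\,1},X_{n,\,1+k})\to\tau^2$ with the fact that the tail $2\sum_{k>b_n}\cov$ is negligible as $b_n\to\infty$ (again by the first ingredient) yields $\E[\hat\tau_n^2]\to\tau^2$. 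Finally $\var(\hat\tau_n^2)$ is a weighted sum of the $\cov(\hat\gamma_n(k),\hat\gamma_n(k'))$ over $0\le k,k'\le b_n$, each of which is $O(1/n)$ by the fourth-order bound; since there are $O(b_n^2)$ pairs this is $O(b_n^2/n)=o(1)$ precisely because $b_n=o(\sqrt n)$, and Chebyshev's inequality completes (i).

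For part (ii) I would condition on the row $\{X_{n,\,i}:i\in[n]\}$ and analyze the randomness in $\Pi_n$. The lag-$0$ term is permutation invariant, hence equals $\hat\gamma_n(0)\overset{p}\to\sigma^2$ as in (i), so it remains to show the permuted lag sum $\frac{2}{n}\sum_{k=1}^{b_n}\sum_{j=1}^{n-k}(X_{n,\,\Pi_n(j)}-\bar X_n)(X_{n,\,\Pi_n(j+k)}-\bar X_n)\overset{p}\to 0$. For the conditional mean, the key identity is that, since $k\ge 1$ forces $j\ne j+k$, $\E_{\Pi_n}\bigl[(X_{n,\,\Pi_n(j)}-\bar X_n)(X_{n,\,\Pi_n(j+k)}-\bar X_n)\bigr] = \frac{1}{n(n-1)}\sum_{a\ne b}(X_{n,\,a}-\bar X_n)(X_{n,\,b}-\bar X_n) = -\hat\gamma_n(0)/(n-1)$, because the centered data sum to zero; summing over the $O(b_n n)$ terms and multiplying by $2/n$ gives a conditional mean of order $b_n\hat\gamma_n(0)/n = O_p(b_n/n)=o_p(1)$.

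The main obstacle is the conditional variance of the permuted lag sum, $\frac{4}{n^2}\sum\sum\cov_{\Pi_n}(W_{j,k},\,W_{j',k'})$ with $W_{j,k}=(X_{n,\,\Pi_n(j)}-\bar X_n)(X_{n,\,\Pi_n(j+k)}-\bar X_n)$. I would classify the pairs by how many of the four positions $j,\,j+k,\,j',\,j'+k'$ coincide, reducing each $\E_{\Pi_n}$ of a product of permuted coordinates to a ratio of falling factorials multiplied by a power sum of the centered data $c_{n,\,i}=X_{n,\,i}-\bar X_n$, where the identity $\sum_i c_{n,\,i}=0$ is used repeatedly to collapse the symmetric sums. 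The diagonal pairs number $O(b_n n)$ and each has conditional variance $O_p(1)$, contributing $O_p(b_n/n)$; pairs sharing exactly one position number $O(b_n^2 n)$ with covariances $O_p(1/n)$, and pairs with disjoint index sets number $O(b_n^2 n^2)$ with covariances $O_p(1/n^2)$, each of these latter two classes contributing $O_p(b_n^2/n^2)$. Since $b_n=o(\sqrt n)$ all three are $o_p(1)$, and the $4+2\delta$ moment hypothesis is what guarantees that the power sums $\sum_i c_{n,\,i}^2$ and $\sum_i c_{n,\,i}^4$ appearing as coefficients are of order $O_p(n)$. Chebyshev's inequality conditional on the data, followed by taking expectations over the data, then gives the lag sum $\overset{p}\to 0$, and combining with the invariant lag-$0$ term yields $\hat\tau_{\Pi_n}^2\overset{p}\to\sigma^2$. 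The genuinely delicate part is the exhaustive and correct bookkeeping of these index-coincidence patterns, since misclassifying even one pattern as lower-order would invalidate the variance bound; this combinatorial accounting is the permutation-side counterpart of the $O(b_n^2/n)$ variance estimate that drives part (i).
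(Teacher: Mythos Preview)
Your proposal is correct and follows essentially the same route the paper takes. The paper defers the proof to the supplement but explicitly flags (just before the lemma) that it proceeds ``using a similar argument to the one presented in Section~3 of \cite{tirlea}'', and Lemma~\ref{lem.ols.monotone.stud} is the univariate, residual-free specialization of Lemma~\ref{lem.ols.studentize}; in both places the argument is exactly the mean--variance scheme you describe: Davydov's inequality (at second and fourth order, the latter using the $4+2\delta$ moment) to get $\E[\hat\tau_n^2]\to\tau^2$ and $\var(\hat\tau_n^2)=O(b_n^2/n)$ for (i), and for (ii) permutation invariance of $\hat\gamma_n(0)$ together with the conditional first- and second-moment bounds on the permuted lag sum obtained by classifying index-coincidence patterns and exploiting $\sum_i (X_{n,i}-\bar X_n)=0$. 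Your orders $O_p(b_n/n)$, $O_p(b_n^2/n^2)$ for the three coincidence classes, and the role of $b_n=o(\sqrt n)$ in killing the $O(b_n^2/n)$ variance, match the paper's accounting.
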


Finally, by combining Theorem \ref{thm.ols.monotone.clt}, Theorem \ref{ols.monotone.unstud.perm}, and Lemma \ref{lem.ols.monotone.stud} with an application of Slutsky's theorem and Slutsky's theorem for randomization distributions (\cite{chung.perm}, Theorem 5.2), we obtain the following result.

\begin{thm} Let $\{X_{n,\, i}: n \in \nn\}$ be a strictly stationary, mean zero $\alpha$-mixing triangular array of real-valued random variables such that, for some $\delta >0$, there exist constants $K,\, C \in \rr^+$ such that 

$$
\ee\left[\left| X_{n,\, 1} \right| ^{4+2 \delta} \right] \leq K 
$$

\noindent and 

$$
\sum_{j =1 }^{n-1 } \alpha_{X_{n,\, \cdot}} (n) ^\frac{\delta}{2+ \delta} \leq C \, \, .
$$

\noindent Suppose also that there exists $\sigma^2 > 0$ such that, for all $n \in \nn$ and $i \in [n]$,

$$
\var(X_{n,\, i}) = \sigma^2 \, \, ,
$$

\noindent and that there exists $\tau^2 > 0$ such that, as $n \to \infty$,

$$
\var(X_{n,\,1} ) + 2 \sum_{i=1}^{n-1} \cov(X_{n,\,1}, \, X_{n,\,1+ i} ) \to \tau^2 \, \, .
$$

\noindent Let $\{b_n: n \in \nn\} \subset \rr$ be a sequence such that $b_n = o(\sqrt{n})$ and, as $n \to \infty$, $b_n \to \infty$. Let 

$$
\hat{\tau}_n^2 = \frac{1}{n}\sum_{i=1}^n (X_{n,\, i} - \bar{X}_n) ^2 + \frac{2}{n} \sum_{i=1}^{b_n} \sum_{j = 1}^{n- i} (X_{n,\,j}- \bar{X}_n) (X_{n,\, j + i} - \bar{X}_n) \, \, .
$$

\noindent Let 

$$
\hat{\beta}_n = \frac{1}{\sum_{i=1}^n \left( i - \frac{n+1}{2} \right)^2 } \sum_{i=1}^n \left( i - \frac{n+1}{2} \right) \left( X_{n,\, i}- \bar{X}_n\right) \, \, .
$$

\noindent Then the following results hold.

\begin{enumerate}

\item[(i)] As $n\to \infty$, 

$$
\frac{n^{3/2}\hat{\beta}_n}{ \sqrt{12} \hat{\tau}_n}\overset{d}\to N(0,\, 1)\, \, .
$$

\item[(ii)] Let $\hat{R}_n$ be the permutation distribution, based on the test statistic $n^{3/2} \hat{\beta}_n/(\sqrt{12} \hat{\tau}_n)$, with associated group of transformations $S_n$. Then, as $n\to \infty$, 

$$
\sup_{t \in \rr} \left| \hat{R}_n (t) - \Phi(t) \right| \overset{p}\to 0 \, \, ,
$$

\noindent where $\Phi$ is the standard Gaussian c.d.f.

\end{enumerate}

\label{ols.monotone.big.thm}
\end{thm}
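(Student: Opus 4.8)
The plan is to assemble the result from the three ingredients already established, Theorem~\ref{thm.ols.monotone.clt}, Theorem~\ref{ols.monotone.unstud.perm}, and Lemma~\ref{lem.ols.monotone.stud}, via two Slutsky-type arguments: the ordinary Slutsky theorem for part (i), and its randomization analogue (\cite{chung.perm}, Theorem 5.2) for part (ii). Note first that the stated moment and mixing hypotheses (with $4 + 2\delta$ moments) are exactly strong enough to invoke all three prerequisites simultaneously. The key preliminary observation is a deterministic rescaling identity relating $\hat{\beta}_n$ to the numerator $T_n$ appearing in Theorem~\ref{thm.ols.monotone.clt}. Using $\sum_{i=1}^n (i - (n+1)/2)^2 = n(n^2-1)/12$, we obtain

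$$
n^{3/2} \hat{\beta}_n = \frac{12 n^2}{n^2 - 1}\, T_n, \qquad T_n = \frac{1}{n^{3/2}} \sum_{i=1}^n \left( i - \frac{n+1}{2}\right)(X_{n,\,i} - \bar{X}_n),
$$

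and the same identity holds verbatim after applying any permutation $\Pi_n$ to the data, since the prefactor $12 n^2/(n^2-1) \to 12$ is nonrandom and permutation invariant.

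For part (i), I would apply Theorem~\ref{thm.ols.monotone.clt} with $\lambda_{n,\,i} \equiv 0$, so that the deterministic centering term vanishes and the statistic is built from the $X_{n,\,i}$ alone, giving $T_n \overset{d}\to N(0,\, \tau^2/12)$. The rescaling identity and Slutsky's theorem then yield $n^{3/2}\hat{\beta}_n \overset{d}\to N(0,\, 12\tau^2)$. By Lemma~\ref{lem.ols.monotone.stud}(i), $\hat{\tau}_n^2 \overset{p}\to \tau^2$, so $\hat{\tau}_n \overset{p}\to \tau$ by the continuous mapping theorem (using $\tau > 0$). A final application of Slutsky's theorem gives $n^{3/2}\hat{\beta}_n/(\sqrt{12}\,\hat{\tau}_n) \overset{d}\to N(0,\,12\tau^2)/(\sqrt{12}\,\tau) = N(0,\,1)$.

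For part (ii), I would start from Theorem~\ref{ols.monotone.unstud.perm}, which states that the permutation distribution of $T_n$ converges uniformly in probability to the c.d.f. $\Phi(\sqrt{12}\,t/\sigma)$, i.e. to that of $N(0,\, \sigma^2/12)$. Combining this with the rescaling identity shows that the permutation distribution of $n^{3/2}\hat{\beta}_n$ converges to that of $N(0,\, 12\sigma^2)$. The crucial point is that the studentizing factor, evaluated on permuted data, converges to $\sigma$ rather than $\tau$: by Lemma~\ref{lem.ols.monotone.stud}(ii), $\hat{\tau}_{\Pi_n}^2 \overset{p}\to \sigma^2$. Dividing through, the randomization Slutsky theorem of \cite{chung.perm} gives that the permutation distribution of $n^{3/2}\hat{\beta}_n/(\sqrt{12}\,\hat{\tau}_n)$ converges pointwise in probability to the c.d.f. of $N(0,\,12\sigma^2)/(\sqrt{12}\,\sigma) = N(0,\,1)$, namely $\Phi$. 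Uniform convergence in $t$ then follows from Poly\'a's theorem, since $\Phi$ is continuous.

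The main obstacle is the bookkeeping in part (ii): one must verify that the randomization Slutsky theorem is applied with the correct limiting constant, which hinges entirely on the fact that $\hat{\tau}_{\Pi_n}^2$ concentrates at $\sigma^2$ and not at $\tau^2$. This mismatch between the limit of $\hat{\tau}_n^2$ (the long-run variance $\tau^2$) and that of $\hat{\tau}_{\Pi_n}^2$ (the marginal variance $\sigma^2$) is precisely what forces the limiting distributions in parts (i) and (ii) to coincide as $N(0,1)$ after studentization, thereby yielding an asymptotically valid test; the remaining manipulations are routine given the cited results.
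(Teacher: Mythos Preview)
Your proposal is correct and follows essentially the same route as the paper's proof, which simply invokes Theorem~\ref{thm.ols.monotone.clt}, Lemma~\ref{lem.ols.monotone.stud}, and Slutsky's theorem for part~(i), and Theorem~\ref{ols.monotone.unstud.perm}, Lemma~\ref{lem.ols.monotone.stud}, and Theorem~5.2 of \cite{chung.perm} for part~(ii). Your write-up supplies additional bookkeeping (the rescaling identity $n^{3/2}\hat\beta_n = \tfrac{12n^2}{n^2-1} T_n$ and the Poly\'a step for uniformity) that the paper leaves implicit, but the argument is otherwise identical.
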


\begin{proof} (i) follows immediately from Theorem \ref{thm.ols.monotone.clt}, Lemma \ref{lem.ols.monotone.stud}, and Slutsky's theorem. (ii) follows from Theorem \ref{ols.monotone.unstud.perm}, Lemma \ref{lem.ols.monotone.stud}, and Theorem 5.2 of \cite{chung.perm}. \end{proof}

\begin{rem} \rm As a consequence of Theorem \ref{ols.monotone.big.thm}, the one-sided permutation test of $H_0 ^{(m)}$, based on the test statistic $n^{3/2} \hat{\beta}_n/(\sqrt{12} \hat{\tau}_n)$ is asymptotically valid at the nominal level $\alpha$.

\end{rem}

\begin{example}\label{ols.monotone.arma} ($ARMA$ process) \rm Let $\{X_n: \, n \in \nn \} \subset \rr$ satisfy the equation 

\beq
\sum_{i = 0} ^p B_i X_{t - i} = \sum_{k = 0} ^q A_k \epsilon_k \, \, ,
\label{eq.arma.ex.monotone}
\eeq

\noindent where the $\epsilon_k$ are independent and identically distributed, and $\ee \epsilon_k =0$, i.e. $X$ is an ARMA$(p, \, q)$ process. Let 

\beq
P(z) := \sum_{i=0} ^p B_i z^i \, \, .
\eeq

\noindent If the equation $P(z) = 0$ has no solutions inside the unit circle $\{ z \in \mathbb{C}: \left | z \right| \leq 1\}$, there exists a unique stationary solution to (\ref{eq.arma.ex.monotone}). By \cite{mokkadem}, Theorem 1, if the distribution of the $\epsilon_k$ is absolutely continuous with respect to Lebesgue measure on $\rr$, and if, for some $\delta >0$, 

\beq
\ee\left[ \left |\epsilon_1\right|^ {4 + 2 \delta}\right] > 0\, \,,
\eeq

\noindent we have that the sequence $\{X_n: ,\, n \in \nn \}$ satisfies the conditions of Theorem \ref{ols.monotone.big.thm}, as long as 

\beq
\tau^2: = \var(X_1) + 2 \sum_{n \geq 1} \cov(X_1, X_{1+ n})
\label{tau2.ols.mon.ar1}
\eeq

\noindent is finite and positive. Therefore, asymptotically, the rejection probability of the permutation test applied to such a sequence will be equal to the nominal level $\alpha$. \qed

\end{example}

Having shown that this permutation test is asymptotically valid, we may now consider the power of this test against local alternatives. Therefore, by an application of Theorem \ref{thm.ols.monotone.clt}, we have the following.

\begin{thm} In the setting of Theorem \ref{ols.monotone.big.thm}, let $\{\lambda_{n,\, i}: n\in \nn, \, i\in [n]\} \subset \rr$ be a triangular array of strictly increasing constants. Let 

$$
Y_{n,\, i} = X_{n,\, i} + \lambda_{n,\, i} \, \,.
$$

\noindent Let $\pp_n$ be the measure induced by the row $\{X_{n,\, i}: i \in [n]\}$, and let $\qq_n$ be the measure induced by the row $\{Y_{n,\, i}: i \in [n]\}$. Suppose that $\{\qq_n\}$ is contiguous to $\{\pp_n\}$. Suppose further that, as $n \to \infty$, 

$$
\frac{1}{n^{3/2}} \left( \sum_{i=1}^n i \lambda_i - \frac{n(n+1)}{2} \bar{\lambda}_n \right) \to \nu \, \, ,
$$

\noindent for some $\nu \in (0, \, \infty)$. Let $\phi_n$ be the test function corresponding to the permutation test of Theorem \ref{ols.monotone.big.thm} conducted at the nominal level $\alpha \in (0, \, 1)$ on the data $\{X_{n,\, i}: i \in [n]\}$. Then, as $n\to \infty$, under $\qq_n$,

$$
\ee_{\qq_n} \phi_n \to 1- \Phi\left(z_{1- \alpha} - \frac{\nu\sqrt{12}}{\tau}\right) \,\,,
$$

\noindent where $\tau$ is as in (\ref{lim.variance.ols.monotone}), and $z_{1-\alpha}$ is the $(1- \alpha)$ quantile of the standard Gaussian distribution.

\label{thm.ols.monotone.local} 
\end{thm}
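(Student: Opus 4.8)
The plan is to write the rejection event as the studentized statistic exceeding a data-dependent critical value, identify the limiting law of the statistic under $\qq_n$ (which must retain the drift $\nu$), and show that both the critical value and the studentizing factor converge in $\qq_n$-probability to the same deterministic limits they possess under $\pp_n$. The single point needing care is that, under the alternative, neither the permutation distribution $\hat R_n$ nor $\hat\tau_n$ is obviously well behaved — a large or high-leverage trend could corrupt both — and it is precisely the contiguity hypothesis $\qq_n\triangleleft\pp_n$ that lets us transport the null-measure convergences of Theorem \ref{ols.monotone.big.thm} and Lemma \ref{lem.ols.monotone.stud} to $\qq_n$.

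First I would record the limiting law of the studentized statistic $T_n = n^{3/2}\hat\beta_n/(\sqrt{12}\,\hat\tau_n)$ under $\qq_n$. Using $\sum_i (i-\tfrac{n+1}{2})^2 = n(n^2-1)/12$, the numerator equals $12\,n^{-3/2}\sum_i (i-\tfrac{n+1}{2})(Y_{n,i}-\bar Y_n)(1+o(1))$. Applying Theorem \ref{thm.ols.monotone.clt} to the array $Y_{n,i}=X_{n,i}+\lambda_{n,i}$ shows the centered sum converges to $N(0,\tau^2/12)$, while the centering $n^{-3/2}(\sum_i i\lambda_{n,i}-\tfrac{n(n+1)}{2}\bar\lambda_n)\to\nu$ by hypothesis; hence $n^{3/2}\hat\beta_n\overset{d}\to N(12\nu,\,12\tau^2)$ under $\qq_n$. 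This step treats the drift directly and does not invoke contiguity, since the mean shift is exactly the effect we wish to retain.

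Next I would transfer the two in-probability limits from $\pp_n$ to $\qq_n$. Under $\pp_n$ the observed data carry no trend, so Lemma \ref{lem.ols.monotone.stud}(i) gives $\hat\tau_n^2\overset{p}\to\tau^2$ and Theorem \ref{ols.monotone.big.thm}(ii) gives $\sup_t|\hat R_n(t)-\Phi(t)|\overset{p}\to 0$. Each is the statement that a fixed measurable functional of the data converges to a constant in $\pp_n$-probability, so by contiguity the same convergences hold in $\qq_n$-probability. Consequently $\hat\tau_n\overset{p}\to\tau$ under $\qq_n$, and, since $\Phi$ is continuous and strictly increasing, uniform convergence of the permutation c.d.f. forces its $(1-\alpha)$-quantile $\hat c_n\overset{p}\to z_{1-\alpha}$ under $\qq_n$. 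Combining this with the previous paragraph through Slutsky's theorem yields $T_n\overset{d}\to N(\sqrt{12}\,\nu/\tau,\,1)$ under $\qq_n$.

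Finally I would assemble the power. The test rejects when $T_n>\hat c_n$; since $\hat c_n$ converges in $\qq_n$-probability to the constant $z_{1-\alpha}$ and the limiting law of $T_n$ is continuous, a further application of Slutsky's theorem gives $\ee_{\qq_n}\phi_n=\qq_n(T_n>\hat c_n)\to\pp(N(\sqrt{12}\nu/\tau,1)>z_{1-\alpha})=1-\Phi\big(z_{1-\alpha}-\nu\sqrt{12}/\tau\big)$, as claimed, with any randomization used to make the finite-sample test exact being asymptotically negligible because the limit is continuous. The main obstacle throughout is exactly the $\qq_n$-convergence of $\hat R_n$ and of $\hat\tau_n$: absent contiguity a trend need not be asymptotically negligible in the empirical measure of the responses — a single $O(\sqrt n)$ coordinate already destroys the Gaussian limit of $\hat R_n$ — and the contiguity hypothesis is the device that rules this out and lets the null-measure analysis carry over verbatim.
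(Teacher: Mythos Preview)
Your argument is correct and matches the approach indicated in the paper: apply Theorem \ref{thm.ols.monotone.clt} directly to the shifted array $Y_{n,i}=X_{n,i}+\lambda_{n,i}$ to obtain the $N(\sqrt{12}\,\nu/\tau,1)$ limit for the studentized statistic, and use the contiguity assumption to carry the $\pp_n$-probability convergences of $\hat\tau_n^2$ (Lemma \ref{lem.ols.monotone.stud}) and of the permutation distribution (Theorem \ref{ols.monotone.big.thm}(ii)) over to $\qq_n$, so that the permutation critical value tends to $z_{1-\alpha}$ and Slutsky's theorem finishes the computation. This is exactly the structure the paper's lead-in sentence ``by an application of Theorem \ref{thm.ols.monotone.clt}'' signals, and your explicit remark that contiguity is what rules out the trend corrupting $\hat R_n$ and $\hat\tau_n$ is the key point.
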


\begin{example}\label{local.ols.monotone} (Local $AR(1)$ alternatives) \rm Let $\rho \in (-1, \, 1)$, and let $\{\epsilon_n: n \in \nn\}$ be an i.i.d. sequence of standard Gaussian random variables. Let $\{X_n: n \in \nn\}$ be the unique stationary sequence satisfying the equation, for $n \geq 1$, 

$$
X_{n +1 } = \rho X_n + \epsilon_n \, \, ,
$$

\noindent i.e. $\{X_n\}$ is an $AR(1)$ process. Clearly, the sequence $\{X_n\}$ has finite moments of every order. In order for $\{X_n\}$ to satisfy the conditions of Theorem \ref{ols.monotone.big.thm}, by Example \ref{ols.monotone.arma} it suffices to show that $\tau^2$, as defined in (\ref{tau2.ols.mon.ar1}), is strictly positive. We have that, for $n \geq 1$, 

$$
\begin{aligned}
\var(X_1) &= \frac{1}{1- \rho^2} \\
\cov(X_1, \, X_{1 + n}) &= \frac{\rho^n}{1 - \rho^2}  \, \, ,
\end{aligned}
$$

\noindent and so 

$$
\begin{aligned}
\tau^2 &= \frac{1}{1 - \rho^2} \left( 1 +2  \sum_{n\geq 1} \rho^k \right) \\
&= \frac{1}{(1- \rho)^2 } \, \, .
\end{aligned}
$$

\noindent Let $h >0$. Consider the triangular array, for $n \in \nn$ and $i \in [n]$, 

$$
\lambda_{n, \, i} = \frac{hi } {n^{3/2} } \, \, .
$$

\noindent Note that, in this setting, the triangular array $\{\lambda_{n,\, i}\}$ corresponds to the sequence of functions $\{g_n: n \in \nn\}$ given by $g_n: [0,\ ,1] \to \rr$, 

$$
g_n (x) = \frac{1}{\sqrt{n}} x \, \, .
$$

\noindent Consider the one-sided permutation test of the null hypothesis $H_0^{(m)}$. In the setting of Theorem \ref{thm.ols.monotone.local}, let $X_{n,\, i} = X_i$, and let $Y_{n,\,i} = X_{n,\,i} + \lambda_{n,\,i}$. Let $\{\pp_n: n \in \nn\}$ and $\{\qq_n: n \in \nn\}$ be as defined in Theorem \ref{thm.ols.monotone.local}. Let $l_n$ be the corresponding log-likelihood ratio. Let $\mu$ be  Lebesgue measure on $\rr$. We have that 

$$
\begin{aligned}
l_n :&=  \log\left ( \frac{\d \qq_n}{\d \pp_n} \right ) \\ 
 &= \log\left( \frac{\d \qq_n}{\d \mu} ( Y_{n,\,1}, \, \dots,\, Y_{n,\, n})\right) - \log\left( \frac{\d \pp_n} {\d\mu}(X_{n,\,1}, \, \dots,\, X_{n,\, n} )\right)  \\
&= \frac{1}{2(1- \rho^2)} \left[ - \left(X_{n,\,1} -\lambda_{n,\,1} \right)^2 - \sum_{i = 2} ^n \left( X_{n,\, i} - \rho X_{n,\, i-1} - \rho\lambda_{n,\, i-1} - \lambda_{n,\, i}\right)^2 + X_{n,\, 1} ^2 + \sum_{i=2}^n \left( X_{n,\, i} - \rho X_{n,\, i-1}\right)^2  \right] \\
&= \frac{1}{2(1- \rho^2)}  \left[ 2 \lambda_{n,\, 1} X_1 + 2 \sum_{i =2}^n (X_i - \rho X_{i-1})(\lambda_{n,\,i} - \rho\lambda_{n,\,i-1} ) - \lambda_{n,\,1} ^2 - \sum_{i =2} ^n (\lambda_{n,\,i} - \rho \lambda_{n,\, i-1} )^2 \right] \\ 
&= \frac{1}{1 -\rho ^2 } \left[ \left(\lambda_{n,\,1}(1 + \rho^2) - \rho\lambda_{n,\,2} \right) X_1  + \sum_{i=2}^{n-1} \left( (1 + \rho^2) \lambda_{n,\,i} - \rho( \lambda_{n,\, i-1} + \lambda_{n,\, i+1} ) \right) X_i + X_n(\lambda_{n,\, n} - \rho \lambda_{n,\, n-1})\right] - \\
&- \frac{1}{2(1- \rho^2)} \left[ \lambda_{n,\,1} ^2 - \sum_{i =2} ^n (\lambda_{n,\,i} - \rho \lambda_{n,\, i-1} )^2\right]  \\
&=  \frac{1}{(1- \rho^2)} \left[- \frac{1}{2}\left( \lambda_{n,\,1} ^2 - \sum_{i =2} ^n (\lambda_{n,\,i} - \rho \lambda_{n,\, i-1} )^2 \right) + \sum_{i =2} ^n (1- \rho)^2 \lambda_{n,\, i} X_i \right] + o_p (1) \,\,.
\end{aligned}
$$

\noindent Letting $\lambda_{n, \, 0} = 0$, we have that 

$$
\begin{aligned}
\sum_{i=1}^n (\lambda_{n,\, i} - \rho \lambda_{n,\, i-1} )^2 &= \sum_{i=1}^n \frac{h^2 i ^2}{n^3} - 2\rho \sum_{i=1}^n \frac{h^2i (i-1) }{n^3} + \rho^2  \sum_{i=1} ^{n-1} \frac{h^2 i^2} {n^3} \\
&= \frac{h^2}{n^3} \left( \frac{n (n+1)(2n + 1)}{6} -2\rho \cdot \frac{n(n^2 - 1)}{3} + \rho^2 \frac{(n-1)n(2n -1)}{6} \right) \\
&= \frac{h^2}{3} (1- \rho)^2 \left( 1 + o(1) \right) \, \, .
\end{aligned} 
$$

\noindent By an analogous argument to the one presented in the proof of Theorem \ref{thm.ols.monotone.clt}, we have that 

$$
\sum_{i=1}^n \lambda_{n,\, i} X_i \overset{d} \to N\left( 0,\, \frac{h^2}{3} \tau^2 \right) \, \, .
$$

\noindent In particular, it follows that, under $\pp_n$, by Slutsky's theorem,

$$
l_n \overset{d} \to N\left( - \frac{h^2}{6}(1- \rho)^2, \, \frac{h^2}{3} (1- \rho)^2 \right) \, \, .
$$

\noindent Therefore, by Le Cam's First Lemma (see \cite{tsh}, Theorem 14.3.1 and Corollary 14.3.1), we have that $\{\qq_n\}$ and $\{\pp_n\}$ are mutually contiguous. Therefore we may apply Theorem \ref{thm.ols.monotone.local} in this setting. It remains to compute $\nu$ as defined in Theorem \ref{thm.ols.monotone.local}; we have that 

$$
\begin{aligned}
\frac{1}{n^{3/2}} \left( \sum_{i=1}^n i \lambda_{n,\, i} - \frac{n(n+1)}{2} \bar{\lambda}_n \right) &= \frac{h}{n^3} \left( \frac{1}{3} n^3 - \frac{1}{4} n^3 \right) + o(1) \\
&= \frac{h}{12} + o(1) \, \, .
\end{aligned}
$$

\noindent It therefore follows that, in this setting, the local limiting power function of the nominal level $\alpha$ one-sided permutation test is given by 

$$
\ee_{\qq_n} [\phi_n] \to 1 - \Phi\left( z_{1- \alpha} - \frac{h(1- \rho)^2}{\sqrt{12}}  \right) \, \, ,
$$

\noindent as $n \to \infty$. \qed

\end{example}

\section{Simulation results} \label{sec.ols.sim}

In this section, we provide Monte Carlo simulations illustrating our results. Subsection \ref{subsec.ols.sim} provides simulation results of the performance of the multivariate least squares permutation test for the unconditional regression coefficient, and Subsection \ref{subsec.monotone.ols.sim} illustrates the performance of the OLS-based test for monotone trend. In both sections, we consider tests conducted at the nominal level $\alpha = 0.05$. The simulation results confirm that the two permutation tests are valid in that, in large samples, the rejection probability under the null hypothesis is approximately equal to $\alpha$.

\subsection{Testing for regression coefficients}\label{subsec.ols.sim}

In this subsection, we provide simulation results for tests of significance in the OLS regression setting. In particular, we provide a comparison of the performance of the two-sided studentized permutation test for significance against the classical two-sided Chi-squared test for significance. As a review, in classical OLS theory, we assume the approximate distribution 

$$
\sqrt{n}(\hat{\beta}_n - \beta) \sim N\left(0, \, \sigma_Y^2 \Sigma_X ^{-1} \right) \, \, .
$$

\noindent Subsequently, the classical Chi-squared test computes the statistic

$$
C_n = C_n((Y_1,\,X_1), \, \dots\, (Y_n, \, X_n)) = \frac{n}{\hat{\sigma}_Y^2} \hat{\beta}_n ^T \hat{\Sigma}_X ^{-1} \hat{\beta}_n \, \, ,
$$

\noindent and, under the null hypothesis $H_0$, $R_n$ is approximately distributed according to the  $\chi_p ^2$ distribution, and so the test compares $R_n$ to the $(1-\alpha)$ quantile of the $\chi_p ^2$ distribution in order to reject or not reject the null hypothesis. We compare the simulated rejection probabilities of the studentized permutation test with the choice of spherical rejection region $A$ in Remark \ref{rem.rej.ols}; to be precise, we consider the permutation test based on the test statistic 

$$
\tilde{C}_n = n \hat{\beta}_n ^T \hat{\Sigma}_X \hat{\Gamma}_n^{-1} \hat{\Sigma}_X \hat{\beta}_n \, \, .
$$

Note that, as a result of Theorem \ref{ols.perm.big.thm}, under $H_0$, the permutation distribution based on the unstudentized sample regression coefficient $\sqrt{n}\hat{\beta}_n$ converges weakly in probability to the same limiting distribution as $\sqrt{n}\hat{\beta}_n$ under the additional assumption that the sequence $\{X_n: n \in \nn\}$ is i.i.d. and independent of $\{Y_n: n \in \nn\}$. It follows that the unstudentized permutation test will exhibit the same limiting behavior as the classic Chi-squared test of significance, and so we omit simulation of its rejection probabilities in the following analysis.

In this simulation, we consider the following two settings. In Table \ref{tab.mdep.ols.multivar}, we consider processes of the following form. Let $p, \, m \in \nn$. Let $\{\xi_n: n \in \nn\}$ and $\{\xi_{n, \, i} ': n \in \nn, \, i \in [p+1] \}$ be two independent sequences of i.i.d. standard Gaussian random variables. For each $n \in \nn$, let 

$$
T_n = \prod_{j=0}^{m-1} \xi_{n + j} \, \, .
$$

\noindent For each $n \in \nn$ and $i \in [p]$, let  

$$
\begin{aligned}
Y_n = T_n \xi'_{n,\, p+1} \\
X_{n,\, i} = T_n \xi'_{n,\, i} \, \, .
\end{aligned}
$$

\noindent We observe that the sequence $\{Z_n = (Y_n, \, X_n): n \in \nn\} \subset \rr^{p+1}$ is $m$-dependent and stationary, and also that the $Z_i$ have finite moments of every order. In particular, it follows that the conditions of Theorem \ref{ols.perm.big.thm} apply, and, since $\cov(Y_1, \, X_1) = 0$, we have that $\beta = 0$. Therefore the asymptotic rejection probability of such a test in this situation should be equal to the nominal level $\alpha$.

\begin{table}[h]
\centering 
\begin{tabular}{cc rrrrrrr} 
\hline\hline 
$m$&$n$&50  &100&    500  &  1,000 &  10,000   \\ [0.5ex]
\hline 
\multirow{2}{*}{0}	&Stud. Perm.&0.043 &0.051&0.067&0.047&0.049\\
				&Classical Chi-Sq. &0.045&0.039&0.046&0.053&0.054\\
				 \hline
\multirow{2}{*}{1}&Stud. Perm.& 0.139&0.158&0.121&0.098&0.070\\
				&Classical Chi-Sq.&0.677&0.716&0.772&0.800&0.833 \\
				 \hline
\multirow{2}{*}{2}&Stud. Perm.&0.222&0.218&0.211&0.176&0.082\\
				&Classical Chi-Sq.&0.802&0.857&0.915&0.934&0.951 \\
				 \hline
\multirow{2}{*}{3}	&Stud. Perm.&0.287&0.286&0.281&0.258&0.113\\
				&Classical Chi-Sq.&0.848&0.890&0.957&0.983&0.985 \\
\hline \hline
\end{tabular}
\caption{Monte Carlo simulation results for null rejection probabilities for tests of $H_0: \beta = 0$, in an $m$-dependent Gaussian product setting with $p = 3$.} \label{tab.mdep.ols.multivar}
\end{table}

We observe that, in the case of $m = 0$, the classical OLS assumptions hold, and it follows that the classical Chi-squared test controls the probability of Type 1 error at the nominal level $\alpha$. However, for $m > 0$, we observe that the classical Chi-squared test has rejection probabilities far exceeding the nominal level $\alpha = 0.05$. The performance of the studentized permutation test is significantly better in comparison although, for larger values of $m$, the simulated rejection probability is still above the nominal level $\alpha$. However, we observe that the simulated rejection probabilities approach the nominal level as the sample size $n$ increases, and it is expected that the performance would improve for larger values of $n$. To illustrate this further, Figure \ref{fig.mdep.ols} plots the empirical distribution of the test statistic for different values of $n$.

\begin{figure}[h!]
\begin{center}
\begin{tabular}{c}
{\mbox{\epsfxsize=100mm\epsfbox{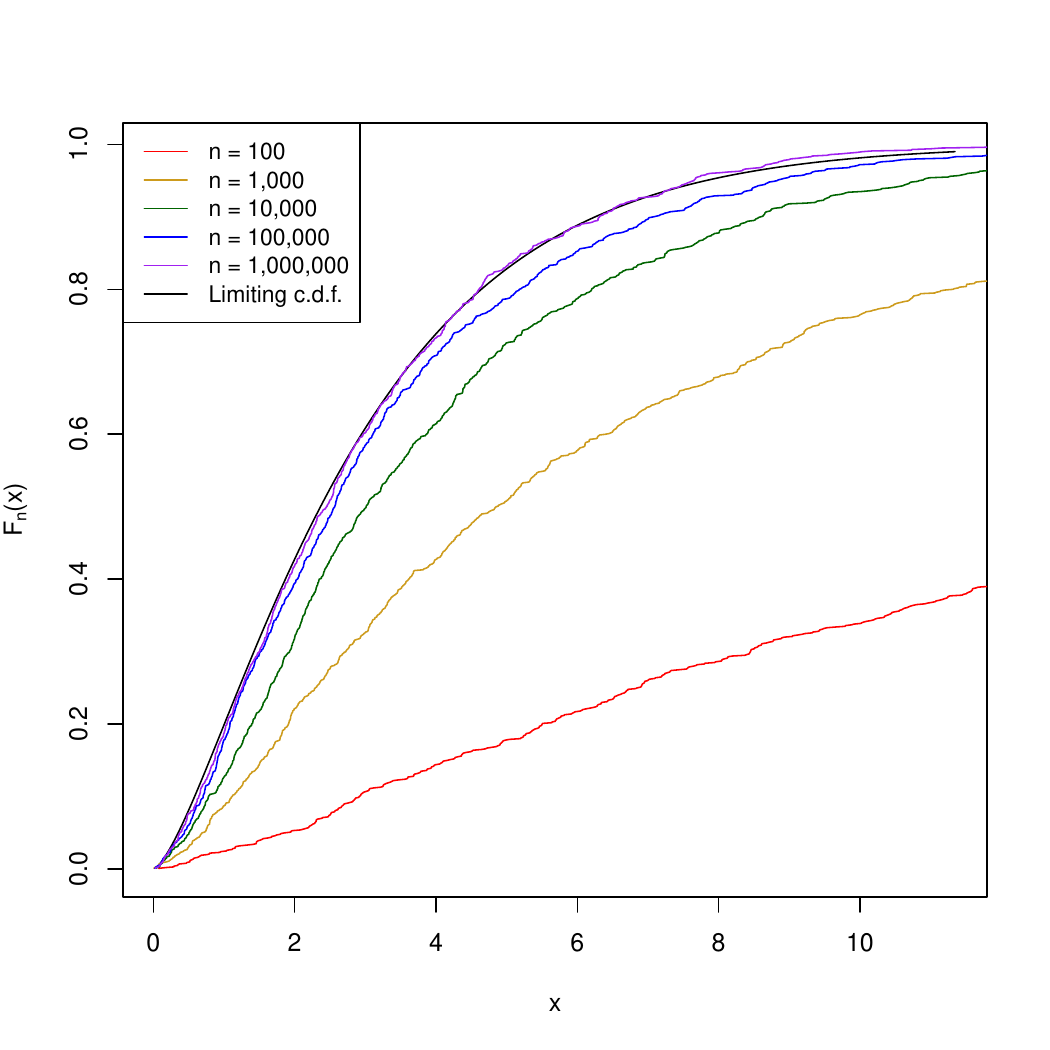}}} 
\end{tabular}
\end{center}
\caption{Plots of the empirical c.d.f.s of the studentized test statistic $n \hat{\beta}_n^T \hat{\Sigma}_X \hat{\Gamma}_n^{-1} \hat{\Sigma}_X \hat{\beta}_n$ for different values of $n$, in the $m$-dependent setting with $m=3$ and $p=3$.} 
\label{fig.mdep.ols}
\end{figure}

Additionally, Table \ref{tab.ar1.ols.multivar} provides simulations in a $VAR(2)$ setting described in Example \ref{varma.ols}. To be precise, let $\rho \in (-1, \, 1)$, and let 

$$
\begin{aligned}
Q &= \begin{pmatrix} 
\frac{1}{\sqrt{3}}&\frac{1}{\sqrt{2}}&\frac{1}{\sqrt{6}}\\
\frac{1}{\sqrt{3}}&-\frac{1}{\sqrt{2}}&\frac{1}{\sqrt{6}}\\
\frac{1}{\sqrt{3}}&0&-\frac{2}{\sqrt{6}}
\end{pmatrix}\\
D &= \begin{pmatrix}
\rho & 0 & 0\\
0 & \rho & 0 \\
0 & 0 & -\rho \end{pmatrix}\\
R & = Q D Q^T
\end{aligned}
$$

\noindent Let $\{\epsilon_n: n \in \nn\} \subset \rr^3$ be an i.i.d. sequence of standard trivariate Gaussian random variables, and let $\{X_n: n \in \nn\} \subset \rr^3 $ be the stationary process satisfying the $VAR(2)$ equation, for all $n \geq 3$, 

$$
X_n = R X_{n-2} + \epsilon_n \, \, .
$$

\noindent The sequence $\{X_n\}$ is indeed stationary since, by construction, the matrix $Q$ is orthogonal, and so $D$ is exactly the matrix of the eigenvalues of $R$. Having appropriately defined $\{X_n\}$, let 

$$
Y_n = X_{n + 1,\, 1}\, \, .
$$

\noindent By Example \ref{varma.ols}, the sequence $\{ X_n\}$ satisfies the conditions of Theorem \ref{ols.perm.big.thm}. Since the $\alpha$-mixing coefficients of the sequence $\{Z_n = (Y_n,\, X_n): n \in \nn\}$ satisfy, for all $n \in \nn$, $\alpha_Z (n) = \alpha_X (n -1)$, and the $Z_i$ form a stationary sequence with finite moments of every order, it follows that we may apply the result of Theorem \ref{ols.perm.big.thm} in this setting. Also, since $X_{2n -1}$ and $X_{2n}$ are independent for all $n \in \nn$ by construction, it follows that, in this case, the unconditional regression coefficient $\beta = 0$, and so the limiting rejection probability of the studentized permutation test converges to the nominal level $\alpha$.

\begin{table}[h]
\centering 
\begin{tabular}{cc rrrrrrr} 
\hline\hline 
$\rho$&$n$&50  &100&    500  &  1,000 &  10,000   \\ [0.5ex]
\hline 
\multirow{2}{*}{-0.8}	&Stud. Perm.&0.037 &0.079&0.044&0.072&0.051\\
				&Classical Chi-Sq. &0.206&0.214&0.228&0.255&0.221\\
				 \hline
\multirow{2}{*}{-0.5}&Stud. Perm.& 0.053&0.032&0.044&0.072&0.036\\
				&Classical Chi-Sq.&0.051&0.062&0.053&0.053&0.076 \\
				 \hline
\multirow{2}{*}{0.5}&Stud. Perm.&0.069&0.062&0.045&0.054&0.046\\
				&Classical Chi-Sq.&0.136&0.147&0.180&0.175&0.162 \\
				 \hline
\multirow{2}{*}{0.8}	&Stud. Perm.&0.072&0.086&0.059&0.148&0.049\\
				&Classical Chi-Sq.&0.379&0.443&0.477&0.503&0.491 \\
\hline \hline
\end{tabular}
\caption{Monte Carlo simulation results for null rejection probabilities for tests of $H_0: \beta = 0$, in a $VAR(2)$ setting, with $p = 3$.} \label{tab.ar1.ols.multivar}
\end{table}

For each situation, 1,000 simulations were performed. Within each simulation, the permutation test was calculated by randomly sampling 2,000 permutations. 

We provide a brief discussion of several computation choices to be made when performing the aforementioned testing procedure. By the results of Lemma \ref{lem.ols.studentize}, for large values of $n$, the estimates $\hat{\Gamma}_n$ and $\hat{\Sigma}_X$ will be strictly positive definite with high probability. However, for smaller values of $n$, it may be the case that the numerical minimal eigenvalue of $\hat{\Gamma}_n$ may be nonpositive, or that, at least numerically, $\hat{\Sigma}_X$ may be non-positive definite, either when computing the test statistic or the permutation distribution. The solution to this issue implemented in the above simulations is as follows. When computing the estimates $\hat{\Gamma}_n$ and $\hat{\Sigma}_X$, we perform a singular value decomposition of the matrix in question, and truncate the estimated eigenvalues at some sufficiently small numerical lower bound $\epsilon > 0$. We observe that, an appropriately small choice of $\epsilon$, i.e. for $\lambda_{\text{min}} (A)$ denoting the minimal eigenvalue of the matrix $A$, choosing $\epsilon < \min\{ \lambda_{\text{min}} (\Gamma), \, \lambda_{\text{min}} (\Sigma_X)\}$ will still allow the result of Lemma \ref{lem.ols.studentize} to hold, i.e. any inference based on this choice of truncated studentization is still asymptotically valid. In practice, however, the suitability of a choice of $\epsilon$ for a particular numerical application is determined by the distribution of the sequence $\{Z_n = (Y_n,\, X_n): n \in \nn\}$ under analysis, For the above simulation, a constant value of $\epsilon = 10^{-4}$ was used. 

A further choice is that of the truncation sequence $\{b_n,\, n \in \nn\}$ used in the definition of $\hat{\Gamma}_n $. Any sequence $  \{b_n\}$ such that, as $n \to \infty$, $b_n \to \infty$ and $b_n = o\left(\sqrt{n} \right)$ is theoretically justified by Theorem \ref{ols.perm.big.thm}, although, in a specific setting, some choices of $\{b_n\}$ will lead to more numerical stability than others. In the above simulation, $\{b_n\}$ was taken to be $[n^{1/3}] + 1$, where $[x]$ denotes the integer part of $x$.

\subsection{Testing for monotone trend using least squares}\label{subsec.monotone.ols.sim}

We now turn our attention to simulations involving the least squares test of the null hypothesis $H_0^{(m)}$ of no monotone trend, against the alternative 
$$
K: g \text{ is strictly increasing.}
$$

\noindent Under the assumption that the sequence $\{X_i: i \in [n]\}$ is i.i.d., the randomization hypothesis holds, and so the unstudentized least squares permutation test will be exact in finite samples and asymptotically valid. However, in general, the randomization hypothesis does not hold under $H_0 ^{(m)}$, and so the unstudentized local Mann-Kendall permutation test will not be exact or even asymptotically valid. However, by the result of Theorem \ref{ols.monotone.unstud.perm}, the studentized local Mann-Kendall permutation test will be asymptotically valid at the nominal level $\alpha$. 

In order to illlustrate this behavior, we provide a comparison of the simulated rejection probabilities of the studentized and unstudentized one-sided least squares permutation tests for lack of monotone trend in two different settings. For both of the following situations, 1,000 simulations were performed. Within each simulation, the permutation test was calculated by randomly sampling 2,000 permutations. 

In Table \ref{tab.mdep.ols.monotone}, we consider processes of the following form. Let $m \in \nn$. Let $\{Z_n: n \in \nn\}$ be an i.i.d. sequence of standard Gaussian random variables. For each $n \in \nn$, let

$$
X_n = \prod_{j =1}^{m-1} Z_{n + j}
$$

\noindent We observe that the sequence $\{X_n: n \in \nn\}$ is stationary and $m$-dependent, and has finite moments of every order. In particular, the triangular array $X_{n,\, i} = X_i$ satisfies the conditions of Theorem \ref{ols.monotone.big.thm}, and so the one-sided studentized permutation test will be asymptotically valid at the nominal level $\alpha$. 

\begin{table}[h]
\centering 
\begin{tabular}{cc rrrrrrr} 
\hline\hline 
$m$&$n$&20  &50&    100  &  500 &  1000   \\ [0.5ex]
\hline 
\multirow{2}{*}{0}	&Stud. Perm.&0.054 &0.046&0.047&0.039&0.045\\
				&Unstud. Perm.&0.047&0.049&0.051&0.045&0.046\\
				 \hline
\multirow{2}{*}{1}&Stud. Perm.& 0.046&0.051&0.059&0.047&0.051\\
				&Unstud. Perm.&0.056&0.044&0.052&0.046&0.061 \\
				 \hline
\multirow{2}{*}{2}&Stud. Perm.&0.031&0.048&0.061&0.049&0.047\\
				&Unstud. Perm.&0.057&0.055&0.066&0.056&0.045 \\
				 \hline
\multirow{2}{*}{3}	&Stud. Perm.&0.041&0.034&0.058&0.047&0.042\\
				&Unstud. Perm.&0.064&0.048&0.055&0.048&0.043 \\
\hline \hline
\end{tabular}
\caption{Monte Carlo simulation results for null rejection probabilities for tests of $H_0^{(m)}$, in an $m$-dependent Gaussian product setting.} \label{tab.mdep.ols.monotone}
\end{table}

While the simulation results of Table \ref{tab.mdep.ols.monotone} support the theoretical asymptotic validity of the studentized permutation test, we also observe that the unstudentized permutation test exhibits similar Type 1 error control. This may be explained as follows: for any choice of $m \in \nn$, for all $i,\, j\in \nn$ such that $i \neq j$, we have that 

$$
\begin{aligned}
\var(X_i) &= 1\\
\cov(X_i,\, X_j) &= 0 \, \, .
\end{aligned}
$$

\noindent It follows that, in this particular setting, the limiting variances of the test statistic and the limiting permutation distribution match, and so the unstudentized permutation test is also asymptotically valid.

In Table \ref{tab.ar.ols.monotone}, we consider a more traditional $AR(1)$ setting: namely, for $\rho\in (-1, \, 1)$ and $\{\epsilon_n: n \in \nn\}$ a sequence of i.i.d. standard Gaussian random variables, let $\{X_n: n \in \nn\}$ be the unique stationary sequence satisfying the equation, for all $n \in \nn$,

$$
X_{n + 1} = \rho X_n + \epsilon_{n+1} \, \, .
$$

\noindent By Example \ref{ols.monotone.arma}, the sequence $\{X_n\}$ satisfies the conditions of Theorem \ref{ols.monotone.big.thm}, and so the rejection probability of the studentized permutation test converges to the nominal level $\alpha$ as $n \to \infty$. 

\begin{table}[h]
\centering 
\begin{tabular}{cc rrrrrrr} 
\hline\hline 
$\rho$&$n$&20  &50&    100  &  500 &  1000   \\ [0.5ex]
\hline 
\multirow{2}{*}{-0.6}	&Stud. Perm.&0.191 &0.064&0.140&0.056&0.041\\
				&Unstud. Perm.&0.006&0.001&0.000&0.001&0.000\\
				 \hline
\multirow{2}{*}{-0.2}&Stud. Perm.& 0.064&0.065&0.057&0.049&0.050\\
				&Unstud. Perm.&0.027&0.017&0.027&0.020&0.023 \\
				 \hline
\multirow{2}{*}{0.2}&Stud. Perm.&0.030&0.048&0.041&0.045&0.044\\
				&Unstud. Perm.&0.089&0.100&0.085&0.088&0.089 \\
				 \hline
\multirow{2}{*}{0.6}	&Stud. Perm.&0.011&0.025&0.041&0.049&0.043\\
				&Unstud. Perm.&0.181&0.215&0.214&0.200&0.180 \\
\hline \hline
\end{tabular}
\caption{Monte Carlo simulation results for null rejection probabilities for tests of $H_{0}^{(m)}$, in an $AR(1)$ setting.} \label{tab.ar.ols.monotone}
\end{table}

We observe that, while the studentized permutation test exhibits Type 1 error control at the nominal level $\alpha$ for all choices of $\rho$, this is not the case for the unstudentized permutation test. Indeed, for $\rho >0$, the rejection probability of the unstudentized permutation test is significantly higher than the nominal level $\alpha$, i.e. we do not have Type 1 error control. While, for $\rho < 0$, the unstudentized permutation test does control Type 1 error, the rejection probability is observed to be significantly below the nominal level $\alpha$. This can lead to issues of Type 2 error against local alternatives of the form described in Example \ref{local.ols.monotone}, for which this test will have power less than $\alpha$.

As in Subsection \ref{subsec.ols.sim}, there are several computational choices to be made in practice. In particular, on account of the same numerical issues involving the studentization factor $\hat{\tau}_n^2$, we truncate the variance estimate at the lower bound $\epsilon = 10^{-6}$. Similarly, we must choose a truncation sequence $\{b_n: n \in \nn\}$ to be used in the definition of $\hat{\tau}_n^2$. In the above simulations, the choice $b_n = [n^{1/3}] + 1$ was used. 

\section{Conclusions}

When the fundamental assumption of exchangeability does not necessarily hold, permutation tests are invalid unless strict conditions on underlying parameters of the problem are satisfied. For instance, the permutation test of $H_0: \beta = 0$ based on the standard OLS sample regression coefficient is asymptotically valid only when $\Sigma_X^{-1} \Gamma \Sigma_X^{-1}$, as defined in (\ref{gam.ols.true.var}), is equal to $\sigma^2 _Y \Sigma_X ^{-1}$. Hence rejecting the null must be interpreted correctly, since rejection of the null with this permutation test does not necessarily imply that the unconditional regression coefficient $\beta$ is different from zero. 

Similar issues occur in using least squares-based tests for lack of monotone trend. Unless the assumption of exchangeability holds, a permutation test of $H_0^ {(m)}$ based on the sample regression coefficient $\hat{\beta}_n$ is only asymptotically valid if $\tau^2$, as defined in (\ref{lim.variance.ols.monotone}), is equal to the limiting variance of the random variables in the corresponding triangular array, i.e. if $\tau^2 = \lim_{n\to \infty} \var(X_{n,\,1})$. Otherwise, a rejection of the null hypothesis $H_0 ^{(m)}$ may not necessarily imply that the function $g$ is strictly monotone, in the sense that it may be the case that $g$ is constant and the permutation test will reject the null hypothesis.

We provide a testing procedure that allows one to obtain asymptotic rejection probability $\alpha$ in a permutation test setting. A significant advantage of this test is that it achieves finite-sample exactness under the assumption of exchangeability, which is only the case in classical OLS tests of significance under the assumption of i.i.d. and Gaussianity, as well as achieving asymptotic level $\alpha$ in a much wider range of settings than classical tests of significance in OLS settings. Similarly, we provide a testing procedure for the null hypothesis $H_0 ^{(m)}$ which is asymptotically valid in a wide range of weakly dependent settings.

Correct implementation of a permutation test is crucial if one is interested in confirmatory inference via hypothesis testing; indeed, proper error control of Type 1, 2 and 3 errors can be obtained for tests of significance in linear regression or in least-squared based tests for monotone trend by basing inference on test statistics which are asymptotically pivotal. A framework has been provided for tests in both of these problem settings.

\addcontentsline{toc}{section}{Bibliography}

\bibliographystyle{apalike}
\bibliography{perm_test_thesis}

\end{document}